\theoremstyle{plain}
\newtheorem{theorem}[equation]{Theorem}
\newtheorem{lemma}[equation]{Lemma}
\theoremstyle{definition}
\newtheorem{definition}[equation]{Definition}
\theoremstyle{remark}
\newcommand{\dv}{\operatorname{div}}
\newcommand{\tr}{\operatorname{tr}}
\numberwithin{equation}{section}
\newcommand{\bR}{\mathbb{R}}
\providecommand{\set}[1]{\{#1\}}
\providecommand{\Set}[1]{\left\{#1\right\}}
\providecommand{\abs}[1]{\lvert#1\rvert}
\providecommand{\Abs}[1]{\left\lvert#1\right\rvert}
\providecommand{\norm}[1]{\lVert#1\rVert}
\begin{document}
\title[PDEs in double divergence form]{Recent progress on second-order elliptic and parabolic equations in double divergence form}

\author[S. Kim]{Seick Kim}
\address[S. Kim]{Department of Mathematics, Yonsei University, 50 Yonsei-ro, Seodaemun-gu, Seoul 03722, Republic of Korea}
\email{kimseick@yonsei.ac.kr}
\thanks{S. Kim is supported by the National Research Foundation of Korea (NRF) under agreement NRF-2022R1A2C1003322.}

\subjclass[2010]{Primary 35B45, 35B65; Secondary 35K10}

\keywords{Schauder estimates, second-order elliptic equations, second-order parabolic equations, double divergence form}

\begin{abstract}
This article presents a comprehensive overview and supplement to recent developments in second-order elliptic partial differential equations formulated in double divergence form, along with an exploration of their parabolic counterparts.
\end{abstract}
\maketitle

\section{Introduction}

This article serves as a survey and supplements of recent advancements in second-order elliptic partial differential equations formulated in double divergence form. 
Furthermore, we will delve into their parabolic counterparts.

We examine the elliptic operator $L^*$ defined as
\[
L^*u=\dv^2(\mathbf A u)-\dv(\boldsymbol{b}u)=D_{ij}(a^{ij}u)-D_i(b^i u),
\]
operating on a domain $\Omega \subset \mathbb{R}^d$ with $d \ge 2$.
Here, the conventional summation convention over repeated indices is applied.
We assume that  $\mathbf A=(a^{ij})$ represents a $d \times d$ symmetric matrix valued function defined across the entire space $\bR^d$satisfying the ellipticity condition:
\begin{equation}					\label{ellipticity-nd}
a^{ij}=a^{ji},\quad \delta \abs{\xi}^2 \le a^{ij}(x) \xi_i \xi_j \le \delta^{-1} \abs{\xi}^2,\quad \forall x \in \Omega,\quad \forall \xi \in \mathbb{R}^d,
\end{equation}
where $\delta \in (0,1]$ is a constant.
Later, we will impose further assumptions on the coefficients $\mathbf A$ and $\boldsymbol b$.

We note that the operator $L^*$ is the formal adjoint of the elliptic operator $L$, where
\[
L v=\tr(\mathbf A D^2 v)+ \boldsymbol b \cdot Dv=a^{ij}D_{ij}v + b^i D_i v.
\]
An important example of a double divergence form elliptic equation is the stationary Kolmogorov equation for invariant measures of a diffusion process.
Interested readers are encouraged to refer to an excellent survey book on this subject \cite{BKRS15}.

We say that a function $g$ defined on $\Omega$ is of Dini mean oscillation and write $g \in \mathrm{DMO}$ if the mean oscillation function $\omega_g: \bR_+ \to \bR$ defined by
\begin{equation*}					
\omega_g(r):=\sup_{x\in \Omega} \fint_{\Omega \cap B_r(x)} \,\abs{g(y)-(g)_{\Omega \cap B_r(x)}}\,dy, \;\; \text{where }(g)_{\Omega \cap B_r(x)}:=\fint_{\Omega \cap B_r(x)} g,
\end{equation*}
satisfies the Dini condition, i.e.,
\[
\int_0^1 \frac{\omega_g(t)}t \,dt <+\infty.
\]
It is evident that if $g$ is Dini continuous, then $g$ is of Dini mean oscillation.
However, it is noteworthy that the Dini mean oscillation condition is distinctly less restrictive than Dini continuity; refer to \cite[p. 418]{DK17} for a concrete example.
Moreover, if $g$ is of Dini mean oscillation, it follows that $g$ is uniformly continuous, with its modulus of continuity being governed by $\omega_g$ as detailed in the Appendix of \cite{HK20}.

It has long been recognized that weak solutions to the double divergence form equation $\dv^2(\mathbf A u)=0$ are not necessarily continuous, even if  $\mathbf A$ is uniformly continuous and elliptic (see \cite{Bauman84b}).
However, if $\mathbf A$ is Dini continuous, then a weak solution of $\dv^2(\mathbf A u)=0$ is continuous.
More precisely, a weak solution has a continuous representative. In particular, if $\mathbf A$ is H\"older continuous, then a solution is H\"older continuous (see \cite{Sjogren73, Sjogren75}).

In recent papers \cite{DK17, DEK18}, it has been shown that weak solutions to the double divergence form equation, $\dv^2(\mathbf A u)=0$, are continuous, accompanied by estimates on their modulus of continuity, provided that $\mathbf A \in \mathrm{DMO}$.
This outcome has facilitated the development of the Green's function for the non-divergence form elliptic operator (with $b^i=c=0$) in $C^{1,1}$ domains, enabling the establishment of pointwise bounds comparable to those of the Green's function for Laplace's equation. See \cite{HK20, DK21, KL21, DKL23} and refer to \cite{DEK21,DKL22} for the parabolic counter part.

The organization of the paper is as follows:
In Section \ref{sec2}, we review elliptic equations in double divergence form, discussing existence and regularity of weak solutions.
In Section \ref{sec3}, we extend the result in Section \ref{sec2} to the case when the lower order coefficients belong to certain Morrey classes.
Parabolic counterparts are presented in Section \ref{sec4}.

\section{Elliptic equations in double divergence form}			\label{sec2}

\subsection{Definition of weak solutions}

Let $\Omega$ be a bounded $C^{1,1}$ domain and let $\nu$ denote the outer unit normal vector to $\partial \Omega$.

We consider the equation $L^*u-cu=\dv^2 \mathbf f + \dv \boldsymbol{g} + h$, where $\mathbf f=(f^{ij})_{i,j=1}^d$, $\boldsymbol{g}=(g^1,\ldots, g^d)$, and $h$ is a scalar function.
For $\eta \in C^\infty(\overline \Omega)$ such that $\eta=0$ on $\partial \Omega$, assuming that we can perform integration by parts to obtain:
\[
\int_\Omega \left(D_{ij}(a^{ij} u)-D_i(b^i u)-cu\right)\eta
=-\int_{\partial\Omega} a^{ij}u \nu_i D_j \eta + \int_\Omega u\left(a^{ij} D_{ij}\eta+ b^i D_i \eta- c \eta\right).
\]
Also, we can formally consider:
\[
\int_\Omega \left(D_{ij}f^{ij}+D_i g^i +h\right) \eta=-\int_{\partial\Omega} f^{ij} \nu_i D_j \eta + \int_\Omega f^{ij}D_{ij}\eta -g^i D_i \eta+h\eta.
\]

Note that it follows from the assumption $\eta=0$ on $\partial \Omega$ that $D\eta = (D\eta \cdot \nu) \nu$, and thus we have $D_j \eta=(D\eta \cdot \nu) \nu_j$.
If we wish the boundary integrals to disappear, for any $\eta \in C^\infty(\overline\Omega)$ satisfying $\eta=0$ on $\partial \Omega$, we should have
\[
0=a^{ij}u \nu_i D_j \eta-f^{ij}\nu_i D_j \eta=(a^{ij}u \nu_i\nu_j -f^{ij} \nu_i \nu_j)(D\eta \cdot \nu)\;\text{ on }\;\partial\Omega.
\]
Therefore, the natural boundary condition we seek is $a^{ij}\nu_i \nu_j u= f^{ij}\nu_i \nu_j$, which can be expressed as follows: 
\[
(\mathbf A \nu\cdot \nu) u= \mathbf f \nu \cdot \nu.
\]
Note that the uniform ellipticity condition implies that $\mathbf A \nu\cdot \nu \ge \delta >0$.
Therefore, we can consider the following more general boundary condition:
\[
u=\frac{\mathbf f \nu \cdot \nu}{\mathbf A \nu\cdot \nu}+\psi\;\text{ on }\;\partial \Omega.
\]
This means that 
\begin{align*}
a^{ij}u \nu_i D_j \eta=a^{ij}u \nu_i \nu_j (D \eta\cdot \nu)&= f^{ij}\nu_i \nu_j (D\eta \cdot \nu)+a^{ij} \nu_i \nu_j \psi(D \eta\cdot \nu)\\
&=f^{ij} \nu_i D_j\eta +a^{ij}D_j\eta \nu_i \psi \;\text{ on }\;\partial\Omega.
\end{align*}

\begin{definition}
Let $\Omega$ be a bounded $C^{1,1}$ domain in $\mathbb{R}^d$.
Assume that $\mathbf f=(f^{ij})$, $\boldsymbol{g}=(g^1,\ldots, g^d)$, $h \in L_1(\Omega)$, and $\psi \in L_1(\partial \Omega)$.
We say that $u\in L_1(\Omega)$ is a weak solution of
\[
L^*u-cu=\dv^2 \mathbf{f}+\dv \boldsymbol{g}+h \;\text{ in }\;\Omega,\quad u=\frac{\mathbf f \nu \cdot \nu}{\mathbf A \nu\cdot \nu}+\psi\;\text{ on }\;\partial\Omega,
\]
if $\boldsymbol{b} u \in L_1(\Omega)$, $cu \in L_1(\Omega)$, and for every $\eta \in C^\infty(\overline \Omega)$ satisfying $\eta=0$ on $\partial \Omega$, we have
\begin{equation}				\label{eq0958sat}
\int_\Omega u\left(a^{ij}D_{ij}\eta + b^i D_i \eta -c\eta\right) =\int_\Omega f^{ij}D_{ij}\eta - g^i D_i \eta+h \eta +\int_{\partial\Omega} \psi a^{ij}D_j\eta \nu_i.
\end{equation}
Let $U \subset \mathbb{R}^d$ be an open set, and let $\mathbf f$, $\boldsymbol g$, $h \in L_{1, \rm{loc}}(U)$.
We say that $u \in L_{1,\rm{loc}}(U)$ is a weak solution of
\[
L^*u-cu=\dv^2 \mathbf f+\dv \boldsymbol g +h\;\text{ in }\;U,
\]
if $\boldsymbol b u \in L_{1,\rm{loc}}(U)$, $cu \in L_{1,\rm{loc}}(U)$, and for $\eta \in C^\infty_c(U)$, we have
\[
\int_{U} u \left(a^{ij}D_{ij}\eta + b^i D_i \eta - c\eta\right) =\int_{U} f^{ij}D_{ij}\eta - g^i D_i \eta+h \eta.
\]
\end{definition}

\subsection{Existence of weak solutions}			\label{sec2.2}
Let $\Omega$ be a bounded $C^{1,1}$ domain in $\mathbb{R}^d$.
For simplicity of presentation, we assume that $d \ge 3$.
We seek a weak solution $u$ from the space $L_{q_0}(\Omega)$ satisfying \eqref{eq0958sat}, where $q_0 >d/(d-2)$.
This result follows from the so-called transposition or duality method, which relies on the existence and uniqueness of $\mathring W^{2}_{q_0'}(\Omega)$ solutions to
\[
L v - cv =\varphi \;\text{ in }\;\Omega,\quad v=0\;\text{ on }\;\partial\Omega.
\]
Here, $p'$ denotes the H\"older conjugate of $p$ and $\mathring W^{2}_{p}(\Omega)$ denotes the completion of the set $\set{u \in C^\infty(\overline \Omega): u=0 \;\text{ on }\;\partial\Omega}$ in  the norm of $W^2_p$.

We assume that $\boldsymbol b \in L_{p_0}(\Omega)$, $c \in L_{p_0/2}(\Omega)$ with $p_0>d$, and $c \ge 0$. 
For $q_0' \in (1,d/2)$, let $\mathbf f \in L_{q_0}(\Omega)$, $\boldsymbol g \in L_{d q_0/(d+q_0)}(\Omega)$,  $h \in L_{d q_0/(d+2 q_0)}(\Omega)$, and $\psi \in L_{q_0}(\partial\Omega)$.

Consider the mapping $T:L_{q_0'}(\Omega)\to \mathbb{R}$ given by
\[
T(\varphi)=\int_\Omega f^{ij}D_{ij} v- \int_\Omega g^i D_i v + \int_\Omega hv+\int_{\partial\Omega} \psi a^{ij}D_jv\nu_i,
\]
where $v \in \mathring W^2_{q_0'}(\Omega)$ is the strong solution of $Lv -c v =\varphi$.
We refer to \cite[Theorem 2.8]{Krylov2023e} for the unique solvability of $v$.
We note that Assumption 2.6 in \cite{Krylov2023e} is satisfied with $q_b=p_0$, $q_c=p_0/2$, and $p=q_0'$.
Moreover, we observe that 
\begin{align*}
\Abs{\int_\Omega f^{ij}D_{ij} v\,} &\le \norm{\mathbf f}_{L_{q_0}(\Omega)} \norm{D^2 v}_{L_{q_0'}(\Omega)}\le N \norm{\mathbf f}_{L_{q_0}(\Omega)} \norm{\varphi}_{L_{q_0'}(\Omega)},\\
\Abs{\int_\Omega g^{i}D_{i} v\,} &\le \norm{\boldsymbol g}_{L_{dq_0/(d+q_0)}(\Omega)} \norm{D v}_{L_{q_0'd/(d-q_0')}(\Omega)}\le N \norm{\boldsymbol g}_{L_{d q_0/(d+q_0)}(\Omega)} \norm{\varphi}_{L_{q_0'}(\Omega)},\\
\Abs{\int_\Omega h v\,} &\le \norm{h}_{L_{d q_0/(d+2q_0)}(\Omega)} \norm{v}_{L_{q_0'd/(d-2q_0')}(\Omega)}\le N \norm{h}_{L_{d q_0/(d+2q_0)}(\Omega)} \norm{\varphi}_{L_{q_0'}(\Omega)},\\
\Abs{\int_{\partial\Omega} \psi  a^{ij}D_jv \nu_i\,} &\le N  \norm{\psi}_{L_{q_0}(\partial\Omega)} \norm{Dv}_{W^1_{q_0'}(\Omega)}\le N \norm{\psi}_{L_{q_0}(\partial\Omega)} \norm{\varphi}_{L_{q_0'}(\Omega)}.
\end{align*}
Here, we used the trace theorem in the last inequality.
Therefore, $T$ is a bounded functional on $L_{q_0'}(\Omega)$, and by the Riesz representation theorem, there is unique $u \in L_{q_0}(\Omega)$ such that
\[
T(\varphi)=\int_\Omega u \varphi,\quad \forall \varphi \in L_{q_0'}(\Omega).
\]
Moreover, we have
\begin{equation}			\label{eq1742sat}
\norm{u}_{L_{q_0}(\Omega)} \le N \left(\norm{\mathbf f}_{L_{q_0}(\Omega)} + \norm{\boldsymbol g}_{L_{d q_0/(d+q_0)}(\Omega)} + \norm{h}_{L_{d q_0/(d+2 q_0)}(\Omega)} +\norm{\psi}_{L_{q_0}(\partial\Omega)}\right).
\end{equation}
We remark that \eqref{eq1742sat} is a slight generalization of \cite[Lemma 2]{EM2017}.
 
Note that for $\eta \in C^\infty(\overline \Omega)$ satisfying $\eta=0$ on $\partial \Omega$, we can take $\varphi=L\eta$, so that the identity \eqref{eq0958sat} holds.
Furthermore, we see that we can take $\eta \in \mathring W^2_{q_0'}(\Omega)$ in the identity \eqref{eq0958sat}.
We have proved the following theorem.
\begin{theorem}
Let $\Omega$ be a bounded $C^{1,1}$ domain in $\mathbb{R}^d$ with $d\ge 3$.
Assume $\mathbf A \in \mathrm{VMO}$, $\boldsymbol b \in L_{p_0}(\Omega)$, $c \in L_{p_0/2}(\Omega)$ with $p_0>d$. Additionally, assume $c \ge 0$.
Let $\mathbf f \in L_{q_0}(\Omega)$, $\boldsymbol g \in L_{d q_0/(d+q_0)}(\Omega)$,  $h \in L_{d q_0/(d+2 q_0)}(\Omega)$, and $\psi \in L_{q_0}(\partial\Omega)$, where $q_0>d/(d-2)$.
There exists a unique weak solution $u \in L_{q_0}(\Omega)$ of the problem
\[
L^*u-cu=\dv^2 \mathbf{f}+\dv \boldsymbol g+h \;\text{ in }\;\Omega,\quad u=\frac{\mathbf f \nu \cdot \nu}{\mathbf A \nu\cdot \nu}+\psi\;\text{ on }\;\partial\Omega,
\]
satisfying the estimate \eqref{eq1742sat}.
Moreover, the identity \eqref{eq0958sat} holds with any $\eta \in \mathring W^{2}_{q_0'}(\Omega)$.
\end{theorem}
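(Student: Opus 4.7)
The plan is to implement exactly the transposition/duality scheme outlined just before the statement, with the final bookkeeping on integrability and density. For each $\varphi\in L_{q_0'}(\Omega)$, I would first invoke \cite[Theorem 2.8]{Krylov2023e}, verifying its Assumption 2.6 with $q_b=p_0$, $q_c=p_0/2$, $p=q_0'$ (this uses $p_0>d$, $c\ge0$, and $\mathbf A\in\mathrm{VMO}$), to produce a unique strong solution $v\in \mathring W^2_{q_0'}(\Omega)$ of $Lv-cv=\varphi$ satisfying $\norm{v}_{W^2_{q_0'}(\Omega)}\le N\norm{\varphi}_{L_{q_0'}(\Omega)}$. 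Then define $T:L_{q_0'}(\Omega)\to\mathbb{R}$ by the formula given in the excerpt.

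The four displayed estimates in the excerpt make $T$ a bounded linear functional: the interior terms are handled by H\"older's inequality combined with the Sobolev embeddings $\mathring W^2_{q_0'}\hookrightarrow W^1_{q_0'd/(d-q_0')}\hookrightarrow L_{q_0'd/(d-2q_0')}$, which are licit since $q_0>d/(d-2)$ means $q_0'<d/2$; the boundary term uses the trace theorem applied to $Dv\in W^1_{q_0'}(\Omega)$. Summing these four bounds gives $\abs{T(\varphi)}\le N\cdot(\text{data})\cdot\norm{\varphi}_{L_{q_0'}(\Omega)}$, so by the Riesz representation theorem there is a unique $u\in L_{q_0}(\Omega)$ with $T(\varphi)=\int_\Omega u\varphi$ for every $\varphi$, and the resulting estimate is precisely \eqref{eq1742sat}.

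To recognize $u$ as a weak solution, take any $\eta\in C^\infty(\overline\Omega)$ with $\eta=0$ on $\partial\Omega$ and set $\varphi:=L\eta-c\eta=a^{ij}D_{ij}\eta+b^iD_i\eta-c\eta$. Since $\boldsymbol b\in L_{p_0}$ and $c\in L_{p_0/2}$ with $p_0>d$, $\varphi\in L_{q_0'}(\Omega)$. By uniqueness in \cite[Theorem 2.8]{Krylov2023e} the associated $v$ coincides with $\eta$, and substituting into the definition of $T$ yields \eqref{eq0958sat}. For this to make sense one must check $\boldsymbol bu,cu\in L_1(\Omega)$; this follows from H\"older's inequality together with $u\in L_{q_0}$, $\boldsymbol b\in L_{p_0}$, $c\in L_{p_0/2}$, and $p_0>d$, $q_0>d/(d-2)$, which give $1/p_0+1/q_0<1$ and $2/p_0+1/q_0<1$. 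To extend to test functions $\eta\in\mathring W^2_{q_0'}(\Omega)$ one approximates $\eta$ by smooth functions vanishing on $\partial\Omega$ and passes to the limit in \eqref{eq0958sat} using the same H\"older/Sobolev/trace bounds that controlled $T$. Uniqueness of $u$ is automatic: two weak solutions with identical data give a $u_1-u_2\in L_{q_0}(\Omega)$ annihilating $L\eta-c\eta$ against every admissible $\eta$, hence annihilating every $\varphi\in L_{q_0'}(\Omega)$, so $u_1=u_2$.

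The main obstacle I anticipate is not any single estimate but the careful bookkeeping of exponents: one must match the Sobolev exponents $q_0'd/(d-q_0')$ and $q_0'd/(d-2q_0')$ with the hypotheses on $\boldsymbol g$ and $h$, and ensure that the conditions $p_0>d$, $q_0>d/(d-2)$ simultaneously give (i) the Krylov $W^2_{q_0'}$ theory, (ii) the four boundedness estimates for $T$, and (iii) the integrability of $\boldsymbol bu$ and $cu$ needed even to speak of a weak solution. Once the exponents are lined up correctly, the argument is essentially the Riesz representation packaging of Krylov's regularity result.
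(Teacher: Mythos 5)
Your proposal follows the paper's own argument essentially line for line: the transposition/duality scheme via Krylov's $\mathring W^2_{q_0'}$ solvability, the four H\"older/Sobolev/trace bounds making $T$ a bounded functional, Riesz representation to produce $u\in L_{q_0}(\Omega)$, and the substitution $v=\eta$ to recover \eqref{eq0958sat}. Two small places where you are actually a bit more careful than the paper's exposition: you correctly set $\varphi=L\eta-c\eta$ (the paper writes $\varphi=L\eta$, which is an abuse, since $T$ is built from the solution of $Lv-cv=\varphi$), and you spell out a uniqueness argument and the verification that $\boldsymbol bu, cu\in L_1(\Omega)$, which the paper leaves implicit. These are welcome clarifications rather than departures; the approach is the same.
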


\subsection{Higher integrability of weak solutions}			\label{sec2.3}
Again, we assume $d \ge 3$ for simplicity of presentation.
Let $\mathbf f \in L_{q_0}(B_{2})$, $\boldsymbol g \in L_{dq_0/(d+q_0)}(B_2)$, and $h \in L_{dq_0/(d+2q_0)}(B_2)$, where $q_0>d/(d-2)$.
Let $u \in L_1(B_{2})$ be a weak solution of
\[
L^* u -cu=\dv^2 \mathbf f+\dv \boldsymbol g+h\;\text{ in }B_2.
\]
We will show that $u \in L_{q_0}(B_1)$ if $\mathbf A \in \mathrm{DMO}$ by adopting the method from \cite{Brezis}.

By definition of a weak solution, we have
\begin{equation}				\label{eq1602sat}
\int_{B_2} u a^{ij}D_{ij}\eta + u b^i D_i \eta -uc\eta = \int_{B_2} f^{ij} D_{ij}\eta-g^i D_i \eta + h\eta, \quad \forall \eta \in C^{\infty}_c(B_2).
\end{equation}
For any $\varphi \in C^{\infty}_c(B_2)$, let $v \in \mathring W^{2}_{p}(B_2)$ be the solution of the problem
\[
a^{ij}D_{ij}v= \varphi \;\text{ in }\;B_2, \quad v=0\;\text{ on }\;\partial B_2.
\]
By \cite[Theorem 1.6]{DK17}, we find that $v \in C^2(B_2)$.
Let $v_{\epsilon}$ be the standard mollification of $v$. 
For any fixed cut-off function $\zeta \in C^\infty_c(B_{r})$ such that $\zeta=1$ in $B_{\rho}$, where $1<\rho<r<2$, we take $\eta=\zeta v_{\epsilon} \in C^\infty_c(B_2)$.
Then by \eqref{eq1602sat}, we have
\begin{multline}				\label{eq1450sat}
\int_{B_2} u a^{ij}\left(D_{ij} \zeta v_{\epsilon} + 2 D_i\zeta D_j v_{\epsilon} + \zeta D_{ij}v_{\varepsilon}\right)
+u b^i \left(D_{i} \zeta v_{\epsilon} + \zeta D_{i}v_{\epsilon}\right)+ u c\zeta v_{\epsilon}\\
=\int_{B_2} f^{ij} \left(D_{ij} \zeta v_{\epsilon} + 2 D_i\zeta D_j v_{\epsilon} + \zeta D_{ij}v_{\epsilon}\right)+ g^i \left(D_{i} \zeta v_{\epsilon} + \zeta D_{i}v_{\epsilon}\right)+h \zeta v_{\epsilon}.
\end{multline}
There is a sequence $\epsilon_n$ converging to $0$ such that
\[
v_{\epsilon_n} \to v, \quad D v_{\epsilon_n} \to Dv,\quad D^2 v_{\epsilon_n} \to D^2 v \quad \text{ a.e. in $B_r$.}
\]
Also, note that for any $\epsilon \in (0, \frac{2-r}{2})$ we have
\[
\norm{v_{\epsilon}}_{W^{2}_\infty(B_r)} \le \norm{v}_{W^{2}_\infty(B_{(r+2)/2})}<\infty.
\]
Hence, by using the dominated convergence theorem, we find that \eqref{eq1450sat} is valid with $v^{(\epsilon)}$ replaced by $v$; i.e.,
\begin{multline}				\label{eq2056sat}
\int_{B_2} u a^{ij}(D_{ij} \zeta v + 2 D_i\zeta D_j v+ \zeta D_{ij}v)
+ u b^i (D_{i} \zeta v+ \zeta D_{i}v)+  u c\zeta v\\
=\int_{B_2}  f^{ij} (D_{ij} \zeta v + 2 D_i\zeta D_j v+ \zeta D_{ij}v)
+ g^i (D_{i} \zeta v+ \zeta D_{i}v)+  h\zeta v.
\end{multline}

On the other hand, by multiplying $\zeta u$ to \eqref{eq1602sat} and noting that $D^2 v$, $\varphi \in L_\infty(B_r)$ and $\zeta u \in L_1(B_2)$,  we have
\begin{equation}			\label{eq1627sat}
\int_{B_2} a^{ij} D_{ij}v \zeta u  = \int_{B_2} \varphi \zeta u.
\end{equation}
By combing \eqref{eq1627sat} and \eqref{eq2056sat}, we obtain
\begin{multline}			\label{eq0959sun}
\int_{B_2} \zeta u \varphi=\int_{B_2}  f^{ij}(D_{ij} \zeta v + 2 D_i\zeta D_j v+ \zeta D_{ij}v)
+ g^i (D_{i} \zeta v+ \zeta D_{i}v)+  h\zeta v\\
-\int_{B_2} u a^{ij}(D_{ij}\zeta v+ 2 D_i\zeta D_jv)+ ub^i \left(D_i\zeta v +\zeta D_iv\right)+uc\zeta v.
\end{multline}
Note that we can choose $\zeta$ such that
\[
\norm{\zeta}_\infty \le1,\quad \norm{D \zeta}_\infty  \le 2/(r-\rho),\quad \norm{D^2 \zeta}_\infty \le 4/(r-\rho)^2.
\]

Take $r=r_0$ and $\rho=r_1$, where $1<r_0<r_1<2$.
It follows from \eqref{eq0959sun} that
\begin{multline}			\label{eq1650sat}
\Abs{\int_{B_2} \zeta u \varphi\,} \lesssim 
\norm{\mathbf f}_{L_{q_0}(B_2)} \norm{v}_{W^2_{q_0'}(B_2)}+\norm{\boldsymbol g}_{L_{dq_0/(d+q_0)}(B_2)} \norm{v}_{W^2_{q_0'}(B_2)} +\norm{h}_{L_{dq_0/(d+2q_0)}(B_2)} \norm{v}_{W^2_{q_0'}(B_2)}\\
+\norm{u}_{L_1(B_2)}\norm{v}_{W^1_\infty(B_2)}+\norm{u \boldsymbol b}_{L_1(B_2)} \norm{v}_{W^1_\infty(B_2)}+\norm{u c}_{L_1(B_2)} \norm{v}_{L_\infty(B_2)}.
\end{multline}
Here, we use the notation $A \lesssim B$ to mean that $A \le N(r_1-r_0)^{-2} B$.

By the $L_p$ theory, for any $p \in (1,\infty)$, we have
\begin{equation}			\label{eq0941tue}
\norm{v}_{W^{2}_{p}(B_2)} \le N \norm{\varphi}_{L_{p}(B_2)}.
\end{equation}
Then it follows from the Sobolev inequalty that for any $p_1' \in (d,\infty)$, we have
\begin{equation}			\label{eq0944tue}
\norm{v}_{W^1_\infty(B_2)} \le N \norm{v}_{W^{2}_{p_1'}(B_2)} \le  N \norm{\varphi}_{L_{p_1'}(B_2)}.
\end{equation}
Note that $q_0'<p_1'$.
Since $\varphi \in C^\infty_c(B_2)$ is arbitrary, by utilizing \eqref{eq0944tue}, \eqref{eq1650sat} and applying the converse of H\"older's inequality, we derive
\begin{multline*}
\norm{u}_{L_{p_1}(B_{r_1})}\le \norm{\zeta u}_{L_{p_1}(B_2)} \lesssim \norm{\mathbf f}_{L_{q_0}(B_2)}+\norm{\boldsymbol g}_{L_{dq_0/(d+q_0)}(B_2)}+\norm{h}_{L_{dq_0/(d+2q_0)}(B_2)}\\
+\norm{u}_{L_1(B_2)}+ \norm{u\boldsymbol b}_{L_1(B_2)}+\norm{cu}_{L_1(B_2)},\quad \forall p_1 \in (1, d/(d-1)).
\end{multline*}

We can iterate this process.
Let $k$ be the largest integer such that
\[
1/d+k(1/d-1/p_0)\le 1/q_0'.
\]
We take $p_1'>d$ sufficiently close to $d$ such that
\[
1/p_1'+k(1/d-1/p_0)<1/q_0'\quad\text{and}\quad 1/p_1'+(k+1)(1/d-1/p_0)>1/q_0'.
\]
For $j=1,\ldots,k$, we set
\[
1/p_{j}'=1/p_1'+(j-1)(1/d-1/p_0),\quad 1/s_{j}=1/p_{j}'-1/p_0, \quad 1/s_{j}^*=1/s_{j}-1/d.
\]
Taking $r_{j+1}$ and $r_j$ in the place of $\rho$ and $r$, and going back to \eqref{eq0959sun}, we see that
\begin{multline}			\label{eq0957tue}	
\Abs{\int_{B_2} \zeta u \varphi\,} \lesssim
\left(\norm{\mathbf f}_{L_{q_0}(B_2)} +\norm{\boldsymbol g}_{L_{dq_0/(d+q_0)}(B_2)} +\norm{h}_{L_{dq_0/(d+2q_0)}(B_2)}\right) \norm{v}_{W^2_{q_0'}(B_2)}\\
+\norm{u}_{L_{p_{j}}(B_{r_j})} \left(\norm{v}_{W^1_{s_j}(B_2)}+\norm{\boldsymbol b}_{L_{p_0}(B_2)} \norm{v}_{W^1_{s_j}(B_2)} +\norm{c}_{L_{p_0/2}(B_2)} \norm{v}_{L_{s_j^*}(B_2)} \right),
\end{multline}
where we used 
\[
1/p_j+1/p_0+1/s_j=1,\quad1/p_j+2/p_0+1/s_j^*<1.
\]
Note that for $j=1,\ldots, k$, we have
\[
1/p_{j+1}'=1/s_j+1/d\quad\text{and}\quad 1/p_{j+1}'<1/q_0'.
\]
Hence, it follows from \eqref{eq0941tue} and the Sobolev inequality that
\[
\norm{v}_{L_{s_j^*}(B_2)}+ \norm{v}_{W^1_{s_j}(B_2)} \le N \norm{\varphi}_{L_{p_{j+1}'}(B_2)},\quad
\norm{v}_{W^2_{q_0'}(B_2)} \le  N \norm{\varphi}_{L_{p_{j+1}'}(B_2)}.
\]
Therefore, we derive from \eqref{eq0957tue} and the converse of H\"older's inequality that
\begin{multline*}
\norm{u}_{L_{p_{j+1}}(B_{r_{j+1}})}\le \norm{\zeta u}_{L_{p_{j+1}}(B_2)} \lesssim \norm{\mathbf f}_{L_{q_0}(B_2)}+\norm{\boldsymbol g}_{L_{dq_0/(d+q_0)}(B_2)}+\norm{h}_{L_{dq_0/(d+2q_0)}(B_2)}\\
+\norm{u}_{L_{p_j}(B_{r_j})}\left(1+ \norm{\boldsymbol b}_{L_{p_0}(B_2)}+\norm{c}_{L_{p_0/2}(B_2)}\right),\quad j=1,\ldots,k.
\end{multline*}
Here, we use the notation $A \lesssim B$ to mean that $A \le N(r_{j+1}-r_j)^{-2} B$.

In the case when $j=k+1$, we set
\[
1/p_{k+2}'=1/q_0',\quad1/s_{k+1}=1/q_0'-1/d,\quad 1/s_{k+1}^*=1/s_{k+1}-1/d.
\]
Taking $r=r_{k+1}$ and $\rho=r_{k+2}$, similar to \eqref{eq0957tue}, we have
\begin{multline*}	
\Abs{\int_{B_2} \zeta u \varphi\,} \lesssim
\left(\norm{\mathbf f}_{L_{q_0}(B_2)} +\norm{\boldsymbol g}_{L_{dq_0/(d+q_0)}(B_2)} +\norm{h}_{L_{dq_0/(d+2q_0)}(B_2)}\right) \norm{v}_{W^2_{q_0'}(B_2)}\\
+\norm{u}_{L_{p_{k+1}}(B_{r_{k+1}})} \left(\norm{v}_{W^1_{s_{k+1}}(B_2)}+\norm{\boldsymbol b}_{L_{p_0}(B_2)} \norm{v}_{W^1_{s_{k+1}}(B_2)} +\norm{c}_{L_{p_0/2}(B_2)} \norm{v}_{L_{s_{k+1}^*}(B_2)} \right),
\end{multline*}
where we used 
\[
1/p_{k+1}+1/p_0+1/s_{k+1}<1,\quad 1/p_{k+1}+2/p_0+1/s_{k+1}^*<1.
\]
Noting $\norm{v}_{W^1_{s_{k+1}}(B_2)} \le N \norm{\varphi}_{L_{q_{0}'}(B_2)}$, and invoking the converse of H\"older's inequality, we obtain
\begin{multline*}
\norm{u}_{L_{q_{0}}(B_{r_{k+2}})}\le \norm{\zeta u}_{L_{q_{0}}(B_2)} \lesssim \norm{\mathbf f}_{L_{q_0}(B_2)}+\norm{\boldsymbol g}_{L_{dq_0/(d+q_0)}(B_2)}+\norm{h}_{L_{dq_0/(d+2q_0)}(B_2)}\\
+\norm{u}_{L_{p_{k+1}}(B_{r_{k+1}})}\left(1+ \norm{\boldsymbol b}_{L_{p_0}(B_2)}+\norm{c}_{L_{p_0/2}(B_2)}\right).
\end{multline*}

After applying an affine transformation if necessary, we have established the following theorem.
\begin{theorem}	\label{thm2.14}
Let $\mathbf A \in \mathrm{DMO}$, $\boldsymbol b \in L_{p_0}(B_{r})$, $c \in L_{p_0/2}(B_{r})$ with $p_0>d$.
Let $\mathbf f \in L_{q_0}(B_{r})$, $\boldsymbol g \in L_{dq_0/(d+q_0)}(B_{r})$, and $h \in L_{dq_0/(d+2q_0)}(B_{r})$, where $q_0>d/(d-2)$.
Suppose $u \in L_{1}(B_{r})$ is a weak solution of
\[
L^*u-cu=\dv^2 \mathbf{f}+\dv \boldsymbol g+h\;\text{ in }\;B_{r}.
\]
Then we have $u \in L_{q_0}(B_{\rho})$ for any $\rho<r$.
\end{theorem}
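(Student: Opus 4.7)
The plan is to prove this by the duality/bootstrap argument that has essentially been laid out in the discussion preceding the statement; I would just organize it as a self-contained proof. The starting point is that, because $\mathbf A \in \mathrm{DMO}$, the adjoint Dirichlet problem $a^{ij}D_{ij} v = \varphi$ in $B_2$ with $v = 0$ on $\partial B_2$ is solvable in $\mathring W^2_p(B_2)$ for every $p \in (1,\infty)$ with the estimate $\|v\|_{W^2_p} \le N\|\varphi\|_{L_p}$; this is the content of \cite[Theorem 1.6]{DK17}. By a scaling/affine reduction I may assume $r = 2$ and prove $u \in L_{q_0}(B_\rho)$ for any $\rho < 2$.

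Next I would run the test-function computation exactly as written above equation \eqref{eq0959sun}: fix a cutoff $\zeta$ supported in $B_r$ with $\zeta = 1$ on $B_\rho$, $1 < \rho < r < 2$, mollify the adjoint solution to $v_\epsilon$, take $\eta = \zeta v_\epsilon$ in the weak formulation \eqref{eq1602sat}, and use dominated convergence (justified by $\|v_\epsilon\|_{W^2_\infty(B_r)} \le \|v\|_{W^2_\infty(B_{(r+2)/2})}$, which is finite because $v$ is $C^2$) to pass $\epsilon \to 0$. Combining with the identity obtained by multiplying \eqref{eq1602sat} by $\zeta u$ yields the master formula \eqref{eq0959sun}, expressing $\int \zeta u \varphi$ as a sum of terms pairing data against derivatives of $v$ and pairing $u$, $u\boldsymbol b$, $uc$ against lower-order derivatives of $v$.

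The bootstrap is then driven by Hölder's inequality on the right side of \eqref{eq0959sun} combined with Sobolev embedding. For the first step I pick $p_1'>d$ close to $d$ so that $\|v\|_{W^1_\infty} \lesssim \|\varphi\|_{L_{p_1'}}$, which, by the converse of Hölder's inequality applied to $\varphi \in C^\infty_c(B_2)$, yields $u \in L_{p_1}(B_{r_1})$ for some $p_1 < d/(d-1)$. For $j \ge 1$, I define $1/p_{j+1}' = 1/p_1' + j(1/d - 1/p_0)$ and the corresponding $s_j, s_j^*$ as in the excerpt, apply Hölder with the exponent relations $1/p_j + 1/p_0 + 1/s_j = 1$ and $1/p_j + 2/p_0 + 1/s_j^* < 1$ on the $\boldsymbol b$ and $c$ terms, and estimate the $v$-factors by $\|v\|_{W^2_{q_0'}} + \|v\|_{W^1_{s_j}} + \|v\|_{L_{s_j^*}} \lesssim \|\varphi\|_{L_{p_{j+1}'}}$. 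Another application of the converse of Hölder's inequality upgrades the integrability of $u$ from $p_j$ to $p_{j+1}$ on a slightly smaller ball $B_{r_{j+1}}$. Because $p_0 > d$ gives strict gain $1/d - 1/p_0 > 0$ at every step, after $k+1$ iterations (where $k$ is the largest integer with $1/d + k(1/d - 1/p_0) \le 1/q_0'$) I reach the target exponent $q_0$, picking a nested sequence $r > r_1 > \cdots > r_{k+2} > \rho$ of radii so that the final ball contains $B_\rho$.

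The main obstacle — and the reason the proof is not entirely routine — is the handling of the lower-order terms in \eqref{eq0959sun}. The $u\boldsymbol b$ and $uc$ pairings each cost one degree of integrability due to Hölder's inequality against the fixed $L_{p_0}$ and $L_{p_0/2}$ coefficients, and one has to choose $p_1'$ and the sequence $p_j'$ so that every term simultaneously fits under the Sobolev embedding of $v \in W^2_{p_{j+1}'}$ into $W^1_{s_j}$ and $L_{s_j^*}$, and so that the iteration terminates at or past $q_0'$ in finitely many steps. Once this combinatorics of exponents is verified, the only analytic inputs are the DMO $L_p$-solvability of the adjoint equation, the Sobolev inequality, and Hölder's inequality plus its converse, all of which are available on $B_2$ under the stated hypotheses.
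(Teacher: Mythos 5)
Your proposal is correct and follows essentially the same route as the paper's own proof in Section~\ref{sec2.3}: the test-function computation with $\eta=\zeta v_\epsilon$ and the pass-to-the-limit via dominated convergence, the master identity \eqref{eq0959sun}, and the bootstrap in the exponents $p_j'$, $s_j$, $s_j^*$ driven by the $\mathrm{DMO}$ $W^2_p$-solvability of the adjoint problem from \cite[Theorem 1.6]{DK17}, Sobolev embedding, and the converse of H\"older's inequality are precisely what the paper does, down to the choice of $p_1'$ close to $d$ and the termination criterion via the integer $k$.
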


\subsection{Continuity}			
We begin with stating the $C^0$ estimates established in \cite{DK17}, where we consider the case when $\boldsymbol{b}=0$ and $c=0$.

\begin{theorem}
Suppose that  $\mathbf A  \in \mathrm{DMO}$.
Let $u \in L_1(B_2)$ is a weak solution of
\[
\dv (\mathbf A u)= \dv^2 \mathbf f \;\mbox{ in } \; B_2,
\]
where $\mathbf f=(f^{kl})$ is of Dini mean oscillation in $B_2$.
Then, we have $u\in C^0(\overline B_1)$.
\end{theorem}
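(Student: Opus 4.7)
My plan is to adapt the Campanato-type iteration with a coefficient-freezing argument along the lines of \cite{DK17}. As a preparation, I would upgrade the integrability of $u$: since $\mathbf{f}$ is of Dini mean oscillation it is in particular bounded on $B_2$, so Theorem \ref{thm2.14} (with $\boldsymbol{b} = c = 0$ and $\boldsymbol{g} = h = 0$) gives $u \in L_{q_0}(B_\rho)$ for any $q_0 > d/(d-2)$ and $\rho < 2$; in fact iterating the same scheme pushes $u$ into $L_q(B_{3/2})$ for every $q<\infty$, which will be essential for the perturbative estimate below.

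Next, fix $x_0 \in \overline{B_1}$ and a small radius $r>0$, set $\bar{\mathbf{A}} := (\mathbf{A})_{B_r(x_0)}$ and $\bar{\mathbf{f}} := (\mathbf{f})_{B_r(x_0)}$, and rewrite the equation on $B_r(x_0)$ as
\[
\dv^2(\bar{\mathbf{A}}\, u) = \dv^2\bigl((\mathbf{f} - \bar{\mathbf{f}}) - (\mathbf{A} - \bar{\mathbf{A}})\, u\bigr).
\]
Decompose $u = v + w$, where $v$ is the weak solution (furnished by the construction in Section \ref{sec2.2}, scaled to $B_r(x_0)$) of the constant-coefficient problem with right-hand side $\dv^2\bigl((\mathbf{f} - \bar{\mathbf{f}}) - (\mathbf{A} - \bar{\mathbf{A}}) u\bigr)$ and $\psi = 0$; then $w := u - v$ satisfies the homogeneous constant-coefficient equation $\dv^2(\bar{\mathbf{A}} w) = 0$ in $B_r(x_0)$. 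After the linear change of variables that reduces $\bar{\mathbf{A}}$ to the identity, $w$ becomes distributionally harmonic, hence smooth by Weyl's lemma, and therefore enjoys the standard affine one-step mean-oscillation decay
\[
\fint_{B_{\kappa r}(x_0)} \abs{w - (w)_{B_{\kappa r}(x_0)}} \lesssim \kappa \fint_{B_r(x_0)} \abs{w - (w)_{B_r(x_0)}},\qquad \kappa \in (0, 1/2).
\]

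For the perturbation $v$, the $L_{q_0}$ a priori estimate \eqref{eq1742sat} scaled to $B_r(x_0)$, together with H\"older's inequality and the higher integrability of $u$, yields a bound of the form
\[
\Bigl(\fint_{B_r(x_0)} \abs{v}^{q_0}\Bigr)^{1/q_0} \lesssim \omega_{\mathbf{f}}(r) + \omega_{\mathbf{A}}(r)^{\alpha}\, \Bigl(\fint_{B_r(x_0)} \abs{u}^{q_0/(1-\alpha)}\Bigr)^{(1-\alpha)/q_0}
\]
for a small $\alpha \in (0,1)$, using boundedness of $\mathbf{A}-\bar{\mathbf{A}}$ to pull the mean oscillation of $\mathbf{A}$ out of a higher $L^s$-norm. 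Combining this with the decay for $w$ leads to a recursive inequality of the form
\[
\phi(\kappa r) \le C_0 \kappa\, \phi(r) + \tilde\omega(r),\qquad \phi(r) := \Bigl(\fint_{B_r(x_0)} \abs{u - c_{x_0, r}}^{q_0}\Bigr)^{1/q_0},
\]
with suitable centering constants $c_{x_0,r}$ and with $\tilde\omega$ a Dini modulus assembled from $\omega_{\mathbf{A}}$ and $\omega_{\mathbf{f}}$. Choosing $\kappa$ so that $C_0 \kappa \le 1/2$ and invoking a standard Dini-summation iteration lemma forces $\phi(r) \to 0$ as $r \to 0$ at a rate independent of $x_0 \in \overline{B_1}$. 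The constants $c_{x_0, r}$ then converge uniformly to a continuous representative of $u$, proving $u \in C^0(\overline{B_1})$.

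The main obstacle is the perturbative bound on $v$: because $\mathbf{A}$ is only of Dini mean oscillation and not pointwise continuous, an $L_\infty$ estimate on $\mathbf{A} - \bar{\mathbf{A}}$ is unavailable, and a careful balance between the higher integrability of $u$ (secured in the first step) and the mean-oscillation modulus of $\mathbf{A}$ is needed to expose $\omega_{\mathbf{A}}(r)$ in the estimate. Once this is in hand, one must verify that the composite modulus $\tilde\omega$ remains Dini so that the iteration closes; this is standard for sums and positive powers of bounded Dini moduli combined with the $L^{q_0/(1-\alpha)}$ bound on $u$, but it is the quantitatively delicate point of the argument.
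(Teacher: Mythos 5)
Your first step (upgrading $u$ to higher local integrability via Theorem~\ref{thm2.14}) matches the paper, which then simply cites \cite[Theorem 1.10]{DK17} and supplements it in two ways: observing that a covering argument removes any dependence on the exact ball sizes, and removing the implicit local-boundedness hypothesis in \cite{DK17} by an approximation argument with smooth $\mathbf A_n$, $\mathbf f_n$. What you attempt instead is to reproduce the Campanato iteration from scratch, and there is a genuine gap in the perturbative step.

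The flaw is in the bound
\[
\Bigl(\fint_{B_r(x_0)} \abs{v}^{q_0}\Bigr)^{1/q_0} \lesssim \omega_{\mathbf{f}}(r) + \omega_{\mathbf{A}}(r)^{\alpha}\, \Bigl(\fint_{B_r(x_0)} \abs{u}^{q_0/(1-\alpha)}\Bigr)^{(1-\alpha)/q_0},
\]
obtained by H\"older plus the crude interpolation $\fint\abs{\mathbf A-\bar{\mathbf A}}^{p} \le \norm{\mathbf A-\bar{\mathbf A}}_\infty^{p-1}\omega_{\mathbf A}(r)$. This forces the fractional exponent $\alpha\in(0,1)$ on $\omega_{\mathbf A}$: since $q_0>1$ and the H\"older conjugate used on $\mathbf A-\bar{\mathbf A}$ is $>1$, one always lands on $\alpha<1$. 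But a fractional power of a Dini modulus need not be Dini: take $\omega(r)=(\log(1/r))^{-2}$ near $0$, which satisfies $\int_0^1 \omega(t)\,dt/t<\infty$, yet $\omega^{1/2}(r)=(\log(1/r))^{-1}$ has $\int_0^1 \omega^{1/2}(t)\,dt/t=\infty$. So the composite modulus $\tilde\omega$ in your recursion may fail the Dini condition, and the iteration $\phi(\kappa r)\le \frac12\phi(r)+\tilde\omega(r)$ does not produce a modulus of continuity. Your closing claim that ``this is standard for sums and positive powers of bounded Dini moduli'' is false for powers less than one.

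The reason \cite{DK17} (and Section~\ref{sec3} of this paper, via Lemma~\ref{lem-weak11-adj}) use a weak-$(1,1)$ estimate for the frozen-coefficient adjoint problem rather than an $L_{q_0}$ estimate is exactly to avoid this: a weak-$(1,1)$ bound gives control of $\bigl(\fint_{B_r}\abs{w}^{1/2}\bigr)^2$ (or any $L^p$ with $p<1$) in terms of $\fint_{B_r}\abs{\mathbf A-\bar{\mathbf A}}\cdot\norm{u}_{L_\infty(B_r)}\lesssim \omega_{\mathbf A}(r)\norm{u}_{L_\infty(B_r)}$, with $\omega_{\mathbf A}$ appearing to the \emph{first} power. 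The price is that one needs $u$ to be locally bounded, which is precisely the point your scheme tries to sidestep and the paper handles by approximating $\mathbf A$ and $\mathbf f$ by smooth $\mathbf A_n$, $\mathbf f_n$, proving a uniform $L_\infty$ bound on the corresponding $u_n$, and passing to the limit. In short: replace the $L_{q_0}$-plus-interpolation bound by a weak-type $(1,1)$ bound for $\dv^2(\bar{\mathbf A}\,\cdot)$, and establish local boundedness of $u$ first; otherwise the Dini iteration does not close.
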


We provide some comments on the proof.
In \cite[Theorem 1.10]{DK17}, it assumes that $u \in L_2(B_4)$ instead of $u \in L_1(B_2)$ in obtaining the conclusion that $u$ is continuous in $B_1$.
Firstly, it is worth noting that Theorem \ref{thm2.14} establishes that  $u \in L_2(B_{3/2})$.
Secondly, the standard covering argument renders the specific sizes of radii irrelevant.

Also, in the proof of \cite[Theorem 1.10]{DK17}, it is implicitly assumed that $u$ is locally bounded.
We explain why this assumption is unnecessary.
Let $\mathbf A_n$ and $\mathbf f_n$ be smooth functions, uniformly converging to $\mathbf A$ and $\mathbf f$, respectively.
Let $u_n$ be the solution of $\dv^2(\mathbf A_n u)=\dv^2 \mathbf f_n$ in $B_2$ with boundary condition $u_n=\mathbf f_n \nu \cdot \nu/ \mathbf A_n \nu\cdot \nu$, and let $u$ be the solution of $\dv^2(\mathbf A u)=\dv^2 \mathbf f$ in $B_2$ with boundary condition $u=\mathbf f \nu \cdot \nu/ \mathbf A \nu\cdot \nu$.
Note that we have
\[
\dv^2(\mathbf A_n(u-u_n))=\dv^2 (\mathbf f-\mathbf f_n)+\dv^2((\mathbf A_n-\mathbf A)u)\;\text{ in }\;B_2
\]
and $\mathbf A_n(u-u_n)\nu\cdot \nu =(\mathbf f-\mathbf f_n)\nu\cdot \nu+ ((\mathbf A_n-\mathbf A)u)\nu\cdot \nu$ on $\partial B_2$.
Then, by the $L_p$ estimates, we have
\[
\norm{u-u_n}_{L_p(B_2)} \le N \norm{\mathbf f- \mathbf f_n}_{L_p(B_2)}+N \norm{(\mathbf A_n- \mathbf A)u}_{L_p(B_2)},
\]
where $N$ depends on $d$, $\delta$, and the modulus of continuity of $\mathbf A_n$, which are uniformly bounded in $n$. In particular, $N$ is independent of $n$.
We have $\mathbf f_n \to \mathbf f$ in $L_p(B_2)$.
Also, the dominated convergence theorem implies that 
\[
\norm{(\mathbf A_n- \mathbf A)u}_{L_p(B_2)} \to 0
\]
because $\mathbf A_n \to \mathbf A$ a.e. and $\abs{(\mathbf A_n- \mathbf A)u} \le 2\delta^{-1} \abs{u} \in L_p(B_2)$.
This implies $u_n \to u$ in $L_p(B_2)$.
By passing to a subsequence, we have $u_n \to u$ a.e. in $B_2$.
In particular, we find that $u$ is locally bounded since it follows from the proof of \cite[Theorem 1.10]{DK17} that $u_n$ satisfies the following uniform estimate:
\[
\norm{u_n}_{L_\infty(B_{3/2})} \lesssim  \norm{u_n}_{L_1(B_2)} +  \int_0^{1}  \frac{\tilde\omega_{\mathbf f_n}(t)}t \,dt \lesssim \norm{u}_{L_1(B_2)} + \int_0^{1} \frac{\tilde\omega_{\mathbf f}(t)}t \,dt.
\]

Next, we will discuss the global $C^0$ estimate.
Let $\Omega$ be a bounded $C^{1,1}$ domain in $\mathbb{R}^d$.
Again, for simplicity, we only consider the case when $d\ge 3$.
Let $\mathbf f \in \mathrm{DMO}$, $\boldsymbol g \in L_{q_0}(\Omega)$, and $h \in L_{q_0/2}(B_2)$, where $q_0>d$.
Let $\psi$ be a continuous function on $\partial \Omega$.
Let $u\in L_1(\Omega)$ be a weak solution of
\[
L^*u-cu=\dv^2 \mathbf{f}+\dv \boldsymbol g+h \;\text{ in }\;\Omega,\quad u=\frac{\mathbf f \nu \cdot \nu}{\mathbf A \nu\cdot \nu}+\psi\;\text{ on }\;\partial\Omega.
\]
We will demonstrate that $u \in C(\overline{\Omega})$.
This will slightly enhance \cite[Theorem 1.8]{DEK18} by requiring only that $u \in L_1(\Omega)$ and incorporating $\boldsymbol{g}$ as an inhomogeneous term.
However, there is essentially no difference in the proof, except for rectifying a flaw in the proof of \cite[Proposition 2.11]{DEK18}.
In that proof, it is assumed that $u \in L_2(\Omega)$ and $c \in L_{p_0/2}(\Omega)$ with $p_0 > d$.
However, these assumptions do not exclude the case where $cu \not\in L_q(\Omega)$ for any $q > 1$.
This constitutes a flaw, which we remedy here.

First, let $\mathbf f \in L_{q_0}(\Omega)$, $\boldsymbol g \in L_{dq_0/(d+q_0)}(\Omega)$, and $h \in L_{dq_0/(d+2q_0)}(\Omega)$, where $q_0>d/(d-2)$, and consider the following problem:
\[
L^*u-cu=\dv^2 \mathbf{f}+\dv \boldsymbol g+h \;\text{ in }\;\Omega,\quad u=\frac{\mathbf f \nu \cdot \nu}{\mathbf A \nu\cdot \nu}\;\text{ on }\;\partial\Omega.
\]
We will show that if $u \in L_1(\Omega)$ is a weak solution of the problem, then $u \in L_{q_0}(\Omega)$.

Fix a point $x_0 \in \partial \Omega$, and let $\boldsymbol \Phi=(\phi^1,\ldots, \phi^d)$ be a $C^{1,1}$ diffeomorphism in $\mathbb{R}^d$ that maps $\Omega$ to $\tilde \Omega:=\boldsymbol \Phi(\Omega)$ such that $\tilde \Omega \cap B_r(y_0)=B_r^+(y_0)$, where $y_0=\boldsymbol\Phi(x_0)$ and
\[
B_r^+(y_0)= B_r(y_0) \cap \mathbb{R}_+^d=\set{y \in \mathbb{R}^d: \abs{y-y_0}<r,\; y^d>0}.
\]
Let $\boldsymbol \Psi=(\psi^1,\ldots, \psi^d)$ be the inverse of $\boldsymbol \Phi$.
Recall the change of variables formula:
\[
\int_{\Omega} f(x) \,dx= \int_{\tilde \Omega} f(\boldsymbol \Psi (y)) \abs{\det D\boldsymbol \Psi(y)}\,dy.
\]

We aim to express the identity \eqref{eq0958sat} in new coordinates using the change of variables.
Observe that in the identity \eqref{eq0958sat}, we can take $\eta$ to be any $C^{1,1}$ function satisfying $\eta=0$ on $\partial\Omega$.
To see this, observe that for $\eta \in C^{1,1}$, there exists a sequence of smooth functions $\eta^{(n)} \in C^\infty(\overline \Omega)$ vanishing on $\partial\Omega$ such that 
\[
\eta^{(n)} \to \eta, \;\; D \eta^{(n)} \to D\eta,\;\; D^2 \eta^{(n)} \to D^2 \eta \;\;\text{ a.e.}\quad\text{and}\quad \norm{\eta^{(n)}}_{C^{1,1}(\Omega)} \le \norm{\eta}_{C^{1,1}(\Omega)}.
\]
The same argument in Section \ref{sec2.3} shows identity \eqref{eq0958sat} is valid with $\eta \in C^{1,1}(\overline\Omega)$.

Let $\tilde \eta(y)= \eta(\boldsymbol\Psi(y))$ and $\tilde u(y)= u(\boldsymbol\Psi(y))$.
By writing $x=\boldsymbol\Psi(y)$ and $y=\boldsymbol\Phi(x)$, we have 
\[
\eta(x)=\tilde \eta(y)=\tilde \eta(\boldsymbol\Phi(x)),\quad u(x)=\tilde u(y)=\tilde u(\boldsymbol\Phi(x)).
\]
Note that $\tilde \eta$ is a $C^{1,1}$ function.
By the chain rule, we have
\begin{align*}
D_{i} \eta(x)&=D_{k} \tilde \eta(\boldsymbol\Phi(x)) D_i \phi^k(x),\\
D_{ij} \eta(x)&= D_{kl} \tilde \eta(\boldsymbol\Phi(x)) D_i \phi^k(x) D_j \phi^l(x)+D_k \tilde \eta(\boldsymbol \Phi(x))D_{ij}\phi^k(x),
\end{align*}
Therefore, the integrals in \eqref{eq0958sat} becomes
\begin{equation}			\label{eq1619wed}
\int_{\tilde\Omega} \tilde u \tilde a^{kl} D_{kl}\tilde \eta + \tilde u \tilde b^k D_k \tilde \eta -\tilde u \tilde c \tilde \eta\,dy= \int_{\tilde\Omega} \tilde f^{kl} D_{kl} \tilde \eta -\tilde g^k D_k \tilde \eta + \tilde h \tilde \eta\,dy,
\end{equation}
where we set
\begin{align*}
\tilde a^{kl}(y)&= a^{ij}(\boldsymbol\Psi(y)) D_i\phi^k(\boldsymbol\Psi(y)) D_j\phi^l(\boldsymbol\Psi(y))\,\abs{\det D\boldsymbol\Psi(y)},\\
\tilde b^k(y)&= \left\{b^{i}(\boldsymbol\Psi(y)) D_i \phi^k(\boldsymbol\Psi(y))+ a^{ij}(\boldsymbol\Psi(y))D_{ij}\phi^k(\boldsymbol\Psi(y))\right\} \abs{\det D\boldsymbol\Psi(y)},\\
\tilde c(y)&= c(\boldsymbol\Psi(y))\,\abs{\det D\boldsymbol\Psi(y)},\\
\tilde f^{kl}(y) &= f^{ij}(\boldsymbol\Psi(y)) D_i\phi^k(\boldsymbol\Psi(y)) D_j\phi^l(\boldsymbol\Psi(y)) \,\abs{\det D\boldsymbol\Psi(y)},\\
\tilde g^k(y) &= \left\{g^i(\boldsymbol\Psi(y)) D_i \phi^k(\boldsymbol\Psi(y)) +f^{ij}(\boldsymbol\Psi(y))D_{ij}\phi^k(\boldsymbol\Psi(y))\right\} \,\abs{\det D\boldsymbol\Psi(y)},\\
\tilde h(y)&=h(\boldsymbol\Psi(y))\, \abs{\det D\boldsymbol\Psi(y)}.
\end{align*}

We can take $\eta=\zeta \circ \boldsymbol \Phi$, where $\zeta$ is any smooth function on $\tilde\Omega$ vanishing on $\partial{\tilde \Omega}$, so that $\tilde \eta=\zeta$.
Hence, it follows from \eqref{eq1619wed} that $\tilde u \in L_1(\tilde \Omega)$ is a weak solution of the problem
\[
\dv^2(\tilde{\mathbf A}\tilde u)-\dv(\tilde{\boldsymbol b}u)-\tilde c \tilde u=\dv^2 \tilde{\mathbf f}+\dv \tilde{\boldsymbol g}+\tilde{h} \;\text{ in }\;\tilde\Omega,\quad \tilde u=\frac{\tilde{\mathbf f} \nu \cdot \nu}{\tilde{\mathbf A} \nu\cdot \nu}\;\text{ on }\;\partial\tilde\Omega.
\]
Note that transformed coefficients and data belong to the same classes as the original ones.
Therefore, using dilation if necessary, we may assume that $\Omega$ is such that $\Omega\cap B_2(x_0)=B_2^+(x_0)$.
Then, for any $\eta \in C^{\infty}_c(B_2)$ vanishing on $B_2 \cap \set{x_d=0}$, we have
\begin{equation}				\label{eq0926wed}
\int_{B_2^+} u a^{ij}D_{ij}\eta + u b^i D_i \eta -uc\eta = \int_{B_2^+} f^{ij} D_{ij}\eta-g^i D_i \eta + h\eta.
\end{equation}
Consider a smooth domain $U$ that is a subset of $\mathbb{R}^d_+ \cap \Omega$ and encloses $B_2^+$.
For any $\varphi \in C^{\infty}_c(B_2^+)$, let $v \in \mathring W^{2}_{p}(U)$ be the solution of the problem
\[
a^{ij}D_{ij}v= \varphi \;\text{ in }\;U, \quad v=0\;\text{ on }\;\partial U.
\]
The standard $L_p$ theory shows that
\[
\norm{v}_{W^2_p(B_2^+)}  \le \norm{v}_{W^2_p(U)}  \le N \norm{\varphi}_{L_p(U)}=N \norm{\varphi}_{L_p(B_2^+)},\quad \forall p \in (1,\infty).
\]
Also, by \cite[Proposition 2.15]{DEK18}, we find that $v \in C^2(\overline{B_r^+})$ for any $r \in (0,2)$.
We extend $v$ on $B_2$ by odd extension, and let $v^{(\epsilon)}$ be the standard mollification of $v$. 
Notice that $v^{(\epsilon)}=0$ on $B_2 \cap \set{x_d=0}$. 
For a function $\zeta \in C^\infty_c(B_{r})$ such that $\zeta=1$ in $B_{\rho}$, where $1<\rho<r<2$ are to be fixed, we take $\eta=\zeta v^{(\epsilon)} \in C^\infty_c(B_2)$.
Then by \eqref{eq0926wed}, we derive \eqref{eq1450sat} with $B_2$ replaced by $B_2^+$.
Since for any $\epsilon \in (0, \frac{2-r}{2})$ we have
\[
\norm{v^{(\epsilon)}}_{W^{2}_\infty(B_r^+)} \le \norm{v}_{W^{2}_\infty(B_{(r+2)/2}^+)}<\infty,
\]
the dominated convergence theorem implies that \eqref{eq2056sat} is valid with $B_2^+$ replaced by $B_2$.
We note that the Sobolev embedding holds for $B_2^+$ in place of $B_2$.
Therefore, by repeating the same argument as in Section \ref{sec2.3}, we conclude that $u \in L_{q_0}(B_1^{+})$.
By using the partition of unity, and using the interior results from Section \ref{sec2.3}, we conclude that $u \in L_{q_0}(\Omega)$.

\begin{theorem}	\label{thm2.16}
Let $\Omega$ be a bounded $C^{1,1}$ domain in $\mathbb{R}^d$ with $d\ge 3$.
Let $\mathbf A \in \mathrm{DMO}$, $\boldsymbol b \in L_{p_0}(\Omega)$, $c \in L_{p_0/2}(\Omega)$ with $p_0>d$.
Let $\mathbf f \in L_{q_0}(\Omega)$, $\boldsymbol g \in L_{dq_0/(d+q_0)}(\Omega)$, and $h \in L_{dq_0/(d+2q_0)}(\Omega)$, where $q_0>d/(d-2)$.
Suppose $u \in L_{1}(\Omega)$ is a weak solution of
\[
L^*u-cu=\dv^2 \mathbf{f}+\dv \boldsymbol g+h \;\text{ in }\;\Omega,\quad u=\frac{\mathbf f \nu \cdot \nu}{\mathbf A \nu\cdot \nu}\;\text{ on }\;\partial\Omega.
\]
Then we have $u \in L_{q_0}(\Omega)$.
\end{theorem}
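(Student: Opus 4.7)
The plan is to reduce the global statement to the interior bound of Theorem \ref{thm2.14} by flattening $\partial\Omega$, rerunning the duality bootstrap of Section \ref{sec2.3} on a half-ball, and stitching the two contributions together with a partition of unity. Theorem \ref{thm2.14} already delivers $u \in L_{q_0}$ on any compact subset of $\Omega$, so the issue is purely a boundary one: produce an $L_{q_0}$ bound in a neighborhood of each $x_0 \in \partial\Omega$.

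For such an $x_0$, I would pick a $C^{1,1}$ diffeomorphism $\boldsymbol\Phi$ straightening $\partial\Omega$ near $x_0$ and carry the weak formulation \eqref{eq0958sat} over to the new coordinates. The key preliminary observation is that \eqref{eq0958sat} holds not only for $\eta \in C^\infty(\overline\Omega)$ with $\eta|_{\partial\Omega}=0$, but more generally for $\eta \in C^{1,1}(\overline\Omega)$ with $\eta|_{\partial\Omega}=0$, by a straightforward density argument. Setting $\tilde u = u \circ \boldsymbol\Psi$, the transformed $\tilde u$ satisfies an equation of the same form on $B_2^+$, with coefficients still in $\mathrm{DMO}$, $L_{p_0}$, $L_{p_0/2}$, data in the required classes, and boundary condition $\tilde u = \tilde{\mathbf f}\nu\cdot\nu/(\tilde{\mathbf A}\nu\cdot\nu)$ on the flat piece $B_2 \cap \{y^d=0\}$. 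After rescaling, I may assume $\Omega \cap B_2(x_0)$ is exactly $B_2^+$.

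On the half-ball, I would mimic Section \ref{sec2.3} as follows. Given $\varphi \in C^\infty_c(B_2^+)$, embed $B_2^+$ in a slightly larger smooth domain $U \subset \bR^d_+$ and solve $\tilde a^{ij} D_{ij} v = \varphi$ in $U$ with $v = 0$ on $\partial U$; the $L_p$-theory combined with \cite[Proposition 2.15]{DEK18} yields $v \in W^2_p(U) \cap C^2(\overline{B_r^+})$ for any $r<2$. Extending $v$ to $B_2$ by odd reflection across $\{y^d=0\}$ and mollifying produces $v^{(\epsilon)} \in C^\infty$ which still vanishes on $\{y^d=0\}$, so $\eta = \zeta v^{(\epsilon)}$ with $\zeta \in C^\infty_c(B_r)$ and $\zeta \equiv 1$ on $B_\rho$ is an admissible test function in the half-ball version of \eqref{eq0926wed}. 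Plugging in, sending $\epsilon \to 0$ by dominated convergence, and then running the same converse-H\"older plus Sobolev-embedding chain with the same exponents $p_j$ as in Section \ref{sec2.3}, I obtain successive bounds of the form $\norm{u}_{L_{p_{j+1}}(B_{r_{j+1}}^+)} \lesssim \norm{u}_{L_{p_j}(B_{r_j}^+)}(1+\norm{\boldsymbol b}_{L_{p_0}}+\norm{c}_{L_{p_0/2}})$ plus data norms. After finitely many iterations this terminates at $u \in L_{q_0}(B_1^+)$, and a standard partition of unity combining this with the interior bound upgrades it to $u \in L_{q_0}(\Omega)$.

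The only genuinely new ingredient compared to the interior argument is the odd-extension device, which is what makes $\eta$ vanish on the flat boundary and thus renders it admissible in the weak formulation with the inhomogeneous boundary condition $u = \mathbf f \nu\cdot\nu/(\mathbf A \nu\cdot\nu)$. Everything else — the combinatorics of the exponents $p_j$, the Sobolev embeddings on $B_r^+$ in place of $B_r$, and the duality argument itself — transfers verbatim from the interior case, so I expect the main effort to be bookkeeping already exhibited in Section \ref{sec2.3} rather than any substantively new difficulty.
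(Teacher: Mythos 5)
Your proposal is correct and follows essentially the same route as the paper: interior bound from Theorem \ref{thm2.14}, flattening the boundary via a $C^{1,1}$ diffeomorphism after upgrading \eqref{eq0958sat} to $C^{1,1}$ test functions by density, solving the auxiliary problem in a larger smooth domain $U \supset B_2^+$ to get $C^2$ regularity up to $\{x_d=0\}$ via \cite[Proposition 2.15]{DEK18}, odd reflection and mollification to produce admissible test functions vanishing on the flat boundary, rerunning the Section \ref{sec2.3} bootstrap on the half-ball, and finishing with a partition of unity. In particular you correctly identified the odd-extension device as the one genuinely new ingredient.
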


Next, we rearrange terms and write the equation as follows.
\[
\dv^2(\mathbf A u)=\dv(\boldsymbol b u)+cu+\dv^2 \mathbf f + \dv \boldsymbol g +h\;\text{ in }\;\Omega.
\]
We will express the RHS as $\dv^2 (\mathbf f+w \mathbf I)$, where $w$ is the solution of the problem
\[
\Delta w= \dv(\boldsymbol bu+ \boldsymbol g)+cu+h\;\text{ in }\;\Omega,\quad w=0 \;\text{ on }\;\partial \Omega.
\]
By Theorem \ref{thm2.16}, we deduce that $u \in L_{q}(B)$ for all $q \in (1,\infty)$.
Since $\boldsymbol b \in L_{p_0}(\Omega)$ and $c \in L_{p_0/2}(\Omega)$ with $p_0>d$, we find that $\boldsymbol b u +\boldsymbol g \in L_p(\Omega)$ and $cu+h \in L_{p/2}(\Omega)$ for some $p>d$.
By the $L_p$ theory and the Sobolev imbedding, we deduce that $w \in C^{\alpha}$ for $\alpha>0$.
In particular, we see that $w\mathbf I \in \mathrm{DMO}$.
Moreover, we note that $(w\mathbf I) \nu\cdot \nu =w=0$ on $\partial \Omega$.
We have shown that $u$ satisfies the following:
\[
\dv^2(\mathbf A u)=\dv^2(\mathbf f + w\mathbf I)\;\text{ in }\;\Omega,\quad u=\frac{(\mathbf f + w \mathbf I) \nu \cdot \nu}{\mathbf A \nu\cdot \nu} + \psi\;\text{ on }\;\partial\Omega.
\]
Here, $\mathbf A$ and $\mathbf f + w\mathbf I$ are functions of Dini mean oscillation.
Recall that for any $x_0 \in \Omega$, by flattening and dilation, we may assume that $\Omega\cap B_2(x_0)=B_2^+(x_0)$.
By combining \cite[Lemma 2.22]{DEK18}, \cite[Proposition 2.23]{DEK18}, and \cite[Theorem 1.10]{DK17}, we find that $u \in C(\overline \Omega)$.
We have proved the following theorem.

\begin{theorem}			\label{thm3.17}
Let $\Omega$ be a bounded $C^{1,1}$ domain in $\mathbb{R}^d$ with $d\ge 3$.
Assume that $\mathbf A \in \mathrm{DMO}$, $\boldsymbol b \in L_{p_0}(\Omega)$, $c \in L_{p_0/2}(\Omega)$ with $p_0>d$.
Let $\mathbf f \in \mathrm{DMO}(\Omega)$, $\boldsymbol g \in L_{q_0}(\Omega)$, and $h \in L_{q_0/2}(\Omega)$, where $q_0>d$.
Let $\psi \in C(\partial\Omega)$.
Suppose $u \in L_{1}(\Omega)$ is a weak solution of
\[
L^*u-cu=\dv^2 \mathbf{f}+\dv \boldsymbol g+h \;\text{ in }\;\Omega,\quad u=\frac{\mathbf f \nu \cdot \nu}{\mathbf A \nu\cdot \nu}+\psi\;\text{ on }\;\partial\Omega.
\]
Then we have $u \in C^0(\overline \Omega)$.
\end{theorem}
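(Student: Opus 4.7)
The plan is to absorb all the lower-order terms and the inhomogeneities $\dv\boldsymbol g + h$ into a single divergence-squared term of the form $\dv^2(\mathbf f + w\mathbf I)$, where $w$ is a suitably regular auxiliary function, thereby reducing Theorem \ref{thm3.17} to the pure DMO case already treated by combining the interior $C^0$ estimate of \cite[Theorem 1.10]{DK17} with the boundary $C^0$ results of \cite[Lemma 2.22 and Proposition 2.23]{DEK18}.

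First, I would upgrade the integrability of $u$ by invoking Theorem \ref{thm2.16}: under the stated hypotheses we obtain $u \in L_q(\Omega)$ for every $q \in (1,\infty)$. Combined with $\boldsymbol b \in L_{p_0}$, $c \in L_{p_0/2}$ for $p_0 > d$, and $\boldsymbol g \in L_{q_0}$, $h \in L_{q_0/2}$ for $q_0 > d$, H\"older's inequality then yields
\[
\boldsymbol b u + \boldsymbol g \in L_p(\Omega), \qquad cu + h \in L_{p/2}(\Omega),
\]
for some $p > d$. Next, I would solve the auxiliary Dirichlet problem
\[
\Delta w = \dv(\boldsymbol b u + \boldsymbol g) + cu + h \text{ in } \Omega, \qquad w = 0 \text{ on } \partial\Omega.
\]
By the $L_p$-theory for the Laplacian on $C^{1,1}$ domains and the Morrey embedding $W^1_p \hookrightarrow C^{1-d/p}$, we obtain $w \in C^\alpha(\overline\Omega)$ for some $\alpha > 0$. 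In particular $w\mathbf I \in \mathrm{DMO}$, and since $w = 0$ on $\partial\Omega$, we have $(w\mathbf I)\nu\cdot\nu = 0$ there.

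Using the identity $\dv^2(w\mathbf I) = \Delta w$, rewriting the original equation as $\dv^2(\mathbf A u) = \dv(\boldsymbol b u) + cu + \dv^2\mathbf f + \dv\boldsymbol g + h$ and substituting shows that $u$ satisfies
\[
\dv^2(\mathbf A u) = \dv^2(\mathbf f + w\mathbf I) \text{ in } \Omega, \qquad u = \frac{(\mathbf f + w\mathbf I)\nu\cdot\nu}{\mathbf A\nu\cdot\nu} + \psi \text{ on } \partial\Omega,
\]
in the weak sense, where both $\mathbf A$ and $\mathbf f + w\mathbf I$ are of Dini mean oscillation and the boundary trace is unchanged because $(w\mathbf I)\nu\cdot\nu$ vanishes on $\partial\Omega$. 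After flattening the boundary locally so that $\Omega \cap B_2(x_0) = B_2^+(x_0)$, continuity of $u$ up to $\partial\Omega$ follows from the combination of \cite[Lemma 2.22]{DEK18}, \cite[Proposition 2.23]{DEK18}, and \cite[Theorem 1.10]{DK17}.

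The main obstacle is really the higher-integrability upgrade: without Theorem \ref{thm2.16}, the right-hand side $\dv(\boldsymbol b u + \boldsymbol g) + cu + h$ need not live in a space good enough to guarantee $w \in C^\alpha$, and the DMO reduction would collapse. The remaining work is careful bookkeeping of the reorganization, in particular verifying that introducing $w\mathbf I$ into the matrix coefficient leaves the prescribed boundary condition intact and that the reduced equation genuinely holds in the weak sense required by the cited boundary $C^0$ results.
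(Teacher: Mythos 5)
Your proposal is correct and follows essentially the same route as the paper: upgrade to $u \in L_q(\Omega)$ for all $q$ via Theorem \ref{thm2.16}, introduce the auxiliary potential $w$ solving $\Delta w = \dv(\boldsymbol b u + \boldsymbol g) + cu + h$ with zero boundary data, observe $w \in C^\alpha(\overline\Omega)$ so that $w\mathbf I \in \mathrm{DMO}$ and $(w\mathbf I)\nu\cdot\nu = 0$ on $\partial\Omega$, rewrite the equation as $\dv^2(\mathbf A u) = \dv^2(\mathbf f + w\mathbf I)$, and conclude via \cite[Lemma 2.22, Proposition 2.23]{DEK18} and \cite[Theorem 1.10]{DK17}. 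This matches the paper's argument step for step.
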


\subsection{Harnack inequality}
The following theorem is a restatement of \cite[Lemma 4.2]{DEK18}, taking account of Theorem \ref{thm2.14}.
\begin{theorem}
Assume the coefficients $\mathbf{A} \in \mathrm{DMO}$.
Let $u\in L_1(B_2)$ be a nonnegative solution to $\dv(\mathbf A u)=0$ in $B_2$.
Then we have
\[
\sup_{B_1}\,u \le N \inf_{B_1}\,u,
\]
where $N$ is a positive constant depending only on $d$, $\delta$, and $\omega_{\mathbf{A}}$.
\end{theorem}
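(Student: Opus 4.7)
The plan is to reduce the statement to its earlier version, namely \cite[Lemma 4.2]{DEK18}, by filling in the integrability gap using Theorem \ref{thm2.14}. The original DEK18 Harnack inequality was formulated under the a priori assumption $u \in L_2$ (the natural energy space for its proof), whereas here we only assume $u \in L_1(B_2)$. Aside from this upgrade, no new analytic input is required.

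Step one: apply Theorem \ref{thm2.14} to the homogeneous problem by setting $\boldsymbol b = 0$, $c = 0$ and $\mathbf f = \boldsymbol g = h = 0$. For $d \ge 3$ the theorem gives $u \in L_{q_0}(B_\rho)$ for any $\rho < 2$ and any $q_0 > d/(d-2)$; choosing $q_0 = 2$ when $d \ge 4$ and $q_0 > 3$ when $d = 3$, we obtain $u \in L_2(B_{3/2})$. In the planar case $d = 2$, the same duality argument of Section \ref{sec2.3} (with the Sobolev exponents adjusted, using that $W^{2}_{p'}$ embeds into $L_\infty$ once $p' > 1$) again promotes $u \in L_1(B_2)$ to $u \in L_2(B_{3/2})$.

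Step two: with the improved integrability in hand, invoke \cite[Lemma 4.2]{DEK18} on the ball $B_{3/2}$ to get a Harnack inequality on a concentric ball of some smaller radius, say $\sup_{B_{3/4}} u \le N \inf_{B_{3/4}} u$. To pass from this local statement to the stated conclusion on $B_1$, use a standard Harnack chain: cover $\overline{B_1}$ by finitely many balls $B_{1/8}(x_k)$ with centers in $\overline{B_1}$, so that each $B_{3/4}(x_k) \subset B_{3/2}$, apply the local Harnack inequality at each center, and connect any two points of $B_1$ by a chain whose length is bounded by a purely dimensional constant. The resulting constant depends only on $d$, $\delta$, and $\omega_{\mathbf A}$, as required.

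The main obstacle is essentially bookkeeping: verifying that the radii selected for Theorem \ref{thm2.14} and for the Harnack chain are mutually compatible, and handling $d = 2$ (where Theorem \ref{thm2.14} is only formulated in the excerpt for $d \ge 3$) by observing that the same proof goes through. No new continuity, regularity, or mean-oscillation estimates are needed beyond the integrability bootstrap.
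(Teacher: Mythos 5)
Your proposal matches the paper's approach exactly: the paper itself presents this theorem merely as a restatement of \cite[Lemma 4.2]{DEK18} made available under the weaker hypothesis $u\in L_1(B_2)$ by first invoking Theorem \ref{thm2.14} to upgrade to $u\in L_2$ on a slightly smaller ball, which is precisely your Step one and Step two. The extra bookkeeping you supply (the Harnack chain to move between radii, and the observation that the $d=2$ case of the iteration needs a separate but analogous Sobolev-exponent check since Theorem \ref{thm2.14} is formally stated only for $d\ge 3$) is sensible and fills in details the paper leaves implicit.
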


In a recent article \cite{BRS23}, the previous Harnack inequality is extended to nonnegative solutions of $\dv^2(\mathbf Au) -\dv(\boldsymbol b u)-cu=0$ with $\boldsymbol b$, $c \in L_{p_0}$ for $p_0>d$.
We can further relax the assumption on $c$ by requiring $c \in L_{p}$ with $p>d/2$.
More precisely, the following theorem is valid.
Refer to Section \ref{sec4.2} for the parabolic counterpart.

\begin{theorem}
Assume the coefficients $\mathbf{A} \in \mathrm{DMO}$, $\boldsymbol b \in L_{p_0}(B_2)$, and $c \in L_{p_0/2}(B_2)$ for some $p_0>d$.
Let $u\in L_1(B_2)$ be a nonnegative solution to
\[
\dv(\mathbf A u)-\dv(\boldsymbol b u)-cu=0\;\text{ in }\;B_2.
\]
Then we have
\[
\sup_{B_1}\,u \le N \inf_{B_1}\,u,
\]
where $N$ is a positive constant depending only on $d$, $\delta$, $\omega_{\mathbf{A}}$, $\norm{\boldsymbol b}_{L_{p_0}(B_2)}$, $\norm{c}_{L_{p_0/2}(B_2)}$, and $p_0$.
\end{theorem}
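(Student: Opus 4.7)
The plan is to follow the strategy of BRS23—most likely a Moser/Bauman-type iteration combined with the weak Harnack inequality for supersolutions—while replacing the hypothesis $c\in L_{p_0}$ by $c\in L_{p_0/2}$. The guiding observation is that the natural Krylov--Safonov scaling for the zero-order coefficient in Harnack-type estimates sits at the $L^{d/2+\varepsilon}$ level rather than $L^{d+\varepsilon}$, so BRS23's assumption is not sharp, and the gap can be closed once the integrability of $u$ itself is boosted.

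The first concrete step is to upgrade the integrability of $u$. Apply Theorem \ref{thm2.14} with vanishing right-hand side data $\mathbf f=\boldsymbol g=h=0$: the hypotheses on the sources are then trivially satisfied for every admissible $q_0$, and one concludes that $u\in L_q(B_r)$ for every finite $q$ and every $r\in(1,2)$, with norm controlled by $\norm{u}_{L_1(B_2)}$, $\norm{\boldsymbol b}_{L_{p_0}(B_2)}$, $\norm{c}_{L_{p_0/2}(B_2)}$, and the DMO modulus of $\mathbf A$. Combining with H\"older's inequality gives $\boldsymbol b u\in L_t(B_r)$ for some $t>d$ and $cu\in L_s(B_r)$ for some $s>d/2$.

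With this improved integrability in hand, two routes present themselves. In the direct route, one reworks BRS23's argument, substituting in each Moser-type estimate any integral of the form $\int cu\cdot u^{\,\cdot}\eta^{\,\cdot}$ by the bound $\norm{c}_{L_{p_0/2}}\norm{u}_{L_q}^{\,\cdot}\norm{\eta}_{\,\cdot}^{\,\cdot}$ with $q$ chosen large; the Sobolev embedding absorbs the extra powers into the energy term exactly because $p_0/2>d/2$. In the indirect route, which mirrors the device used in the proof of Theorem \ref{thm3.17}, one lets $w$ solve $\Delta w=\dv(\boldsymbol b u)+cu$ in $B_{3/2}$ with zero boundary data; by $L_p$ elliptic theory and Morrey's embedding, $w\in C^{0,\alpha}(\overline{B_{3/2}})$ for some $\alpha>0$, and the equation rewrites as $\dv^2(\mathbf A u-w\mathbf I)=0$, which is in BRS23's framework with no zero-order coefficient. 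The main obstacle in this second route is a circularity: $w$ depends on $u$, so one must verify that $\norm{w}_{C^{0,\alpha}}$—and hence the Harnack constant—is ultimately dominated only by the quantities named in the theorem, which is achieved by chaining the Schauder/$L_p$ estimate for $w$ with the output of Theorem \ref{thm2.14} and absorbing via a standard iteration on shrinking radii. Either route produces the claimed Harnack inequality with constant depending on $d$, $\delta$, $\omega_{\mathbf A}$, $\norm{\boldsymbol b}_{L_{p_0}(B_2)}$, $\norm{c}_{L_{p_0/2}(B_2)}$, and $p_0$.
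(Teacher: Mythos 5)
The paper gives no proof of this theorem: it points to \cite{BRS23} (which treats $\boldsymbol b,c\in L_{p_0}$ with $p_0>d$) and to \cite{GK24} for the parabolic analogue, so your sketch must stand on its own. As written, your ``indirect route'' contains a genuine gap. Setting $\Delta w=\dv(\boldsymbol b u)+cu$ turns the equation into $\dv^2(\mathbf A u)=\dv^2(w\mathbf I)$, which is an \emph{inhomogeneous} double divergence form equation for $u$. The Harnack inequalities you wish to invoke --- the preceding theorem in this subsection, or BRS23 --- concern nonnegative solutions of the \emph{homogeneous} equation, and they do not apply to $u$ here. Even granting a Harnack-with-error of the form $\sup_{B_1} u \le N\inf_{B_1} u + N\norm{w}_{C^{0,\alpha}}$, the term $\norm{w}_{C^{0,\alpha}}$ is controlled (via $L_p$ theory and Theorem \ref{thm2.14}) by $\norm{u}_{L_q(B_{3/2})}$, which is comparable to $\sup u$ rather than to $\inf u$; so the proposed absorption ``via a standard iteration on shrinking radii'' does not close the loop. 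One would need a smallness mechanism exploiting the absolute continuity of $\norm{\boldsymbol b}_{L_{p_0}(B_r)}$ and $\norm{c}_{L_{p_0/2}(B_r)}$ as $r\downarrow 0$, combined with a chaining argument over small balls, none of which is supplied. This $w$-device is appropriate for the continuity estimate of Theorem \ref{thm3.17} --- there an inhomogeneous $\mathrm{DMO}$ right-hand side is harmless --- but it does not transfer to a Harnack conclusion.

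The ``direct route'' is too vague to evaluate, and its guiding premise that BRS23 proceeds by a Moser/Bauman-type iteration is unverified; the title of that paper points to Zvonkin's transform, i.e.\ a drift-removing change of variables, not Moser iteration, so the claim that one can ``substitute in each Moser-type estimate'' is not anchored in what BRS23 actually does. The one sound and useful step in your proposal is the first: applying Theorem \ref{thm2.14} with $\mathbf f=\boldsymbol g=h=0$ to lift $u$ into $L_q$ locally for every $q<\infty$, whence $\boldsymbol bu\in L_t$ with some $t>d$ and $cu\in L_s$ with some $s>d/2$. The substantive part of the argument remains to be supplied.
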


\section{Elliptic equations with coefficients in Morrey classes}			\label{sec3}
We assume that the lower-order coefficients $\boldsymbol b$ and $c$ belong to the Morrey spaces such that Theorem 2.8 in \cite{Krylov2023e} can be applied.
In particular, we require that $\boldsymbol b$ and $c$ belong to  $E_{q_b, 1}$ and $E_{q_c, 2}$, respectively, where $q_b \in (d/2, d]$ and $q_c \in (1,d/2]$, and that $c \ge 0$.
See \cite{Krylov2023e} for details.
Additionally, we assume that
\begin{equation}		\label{omega_coef}
\omega_{\rm coef}(r):=  \omega_{\mathbf A}(r)+  r\sup_{x \in \Omega} \fint_{\Omega \cap B_r(x)}\abs{\boldsymbol b}+ r^2\sup_{x \in \Omega} \fint_{\Omega \cap B_r(x)}\abs{c}
\end{equation}
satisfies the Dini condition; i.e.
\[
\int_0^{1} \frac{\omega_{\rm coef}(t)}{t}\,dt<+\infty.
\]

Also, we assume that
$\mathbf f \in \mathrm{DMO}(\Omega)$, $\boldsymbol g \in L_{q_0}(\Omega)$, and $h \in L_{q_0/2}(\Omega)$, where $q_0>d$, so that 
\begin{equation}			\label{omega_dat}
\omega_{\rm dat}(r):=  \omega_{\mathbf f}(r)+ r\sup_{x \in \Omega} \fint_{\Omega \cap B_r(x)}\abs{\boldsymbol g} + r^2 \sup_{x \in \Omega} \fint_{B_r(x) \cap \Omega}\abs{h}
\end{equation}
satisfies the Dini condition.

Let $u \in L_1(\Omega)$ be a weak solution of
\[
\left\{
\begin{aligned}
\dv^2(\mathbf A u)-\dv(\boldsymbol{b} u)-cu&=\dv^2 \mathbf f + \dv \boldsymbol{g}+h\;\text{ in }\Omega,\\
(\bar{\mathbf A} \nu \cdot \nu)u &=\mathbf{f}\nu \cdot \nu\;\text{ on }\;\partial \Omega.
\end{aligned}
\right.
\]
We will first derive modulus of continuous estimates for  $u$ under qualitative assumption that $\boldsymbol{b}$ and $c$ are bounded functions.
Then, it will follow from Theorem \ref{thm3.17} that $u$ is a bounded function.

By modifying the proof of \cite[Theorem 1.10]{DK17}, we will first establish the interior estimates, under the assumption that $B_4 \Subset \Omega$, which is

For $x_0 \in B_3$ and $0<r<\frac13$, let
\[
\bar{\mathbf A}=\fint_{B_r(x_0)} \mathbf A\quad \text{and}\quad
\bar{\mathbf f}=\fint_{B_r(x_0)} \mathbf f.
\]
We write the equation as
\[
\dv^2(\bar{\mathbf A} u)=\dv^2((\bar{\mathbf A}-\mathbf A) u)+\dv(\boldsymbol{b} u)+cu+\dv^2 (\mathbf f -\bar{\mathbf f}) + \dv \boldsymbol{g}+h.
\]
We decompose $u=v+w$, where $w$ is a unique solution of the problem
\[
\left\{
\begin{aligned}
\dv^2(\bar{\mathbf A} w) &= \dv^2 ((\bar{\mathbf A}-\mathbf A) u +\mathbf f -\bar{\mathbf f}) + \dv(\boldsymbol{b} u+\boldsymbol{g})+cu+h \;\text{ in }\;B_r(x_0),\\
(\bar{\mathbf A} \nu \cdot \nu)w &=(\mathbf{f}-\bar{\mathbf f}-(\mathbf{A}-\bar{\mathbf A})u) \nu \cdot \nu\;\text{ on }\;\partial B_r(x_0),
\end{aligned}
\right.
\]
\begin{lemma}				\label{lem-weak11-adj}
Let $B=B_1(0)$.
Let $\mathbf A_0$ be a constant symmetric matrix satisfying the condition \eqref{ellipticity-nd}.
Let $p_0>1$.
For  $\mathbf f \in L_{p_0}(B)$, $\boldsymbol{g} \in L_{p_0}(B)$, and $h \in L_{p_0}(B)$, let $u \in L_{p_0}(B)$ be the weak solution to the problem
\[
\left\{
\begin{aligned}
\dv^2(\mathbf A_0 u)&= \dv^2 \mathbf f+ \dv \boldsymbol g + h\;\mbox{ in }\; B,\\
(\mathbf A_0 \nu \cdot \nu) u &= \mathbf f \nu\cdot \nu \;\mbox{ on } \; \partial B.
\end{aligned}
\right.
\]
Then for any $t>0$, we have
\[
t \Abs{\set{x \in B : \abs{u(x)} > t}}  \le C \left(\int_{B} \abs{\mathbf f}+ \abs{\boldsymbol g}+ \abs{h} \right),
\]
where $C=C(d, \beta, p_0)$.
\end{lemma}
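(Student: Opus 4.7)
The proof proceeds by duality and reduction to Calderón–Zygmund and fractional-integral estimates for the constant-coefficient operator $L_0 v := a_0^{ij} D_{ij} v$, which is essentially the Laplacian after an affine change of variables. The strategy has three ingredients: (i) a representation formula for $u$ via the Dirichlet Green's function $G(x,y)$ for $L_0$ on $B$; (ii) classical weak-$(1,1)$ for the Calderón–Zygmund operator whose kernel is $D_y^2 G$; (iii) the weak-type Hardy–Littlewood–Sobolev estimate for the fractional-integral operators whose kernels are $D_y G$ and $G$.

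\textbf{Step 1: Representation formula.} After an affine change of variables reducing $L_0$ to the Laplacian on a (transformed) ball, classical elliptic theory yields a symmetric Dirichlet Green's function $G$ on $B$ satisfying the pointwise bounds
\[
\abs{G(x,y)} \lesssim \abs{x-y}^{2-d}, \quad \abs{D_y G(x,y)} \lesssim \abs{x-y}^{1-d}, \quad \abs{D_y^2 G(x,y)} \lesssim \abs{x-y}^{-d},
\]
with the constants depending only on $d$ and $\delta$. For $\varphi \in C^\infty_c(B)$, set $v(y) := \int_B G(x,y) \varphi(x)\,dx$, so $v \in \mathring W^2_{p_0'}(B)$ solves $L_0 v = \varphi$ with $v = 0$ on $\partial B$. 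Testing the weak formulation of the equation for $u$ against $v$ (justified as in Section \ref{sec2.3}, since $\eta = v$ is admissible by approximation in $C^{1,1}$) and using the boundary condition $(\mathbf A_0 \nu\cdot\nu) u = \mathbf f \nu\cdot \nu$ to eliminate the boundary term, we obtain
\[
\int_B u \varphi = \int_B f^{ij} D_{ij} v - g^i D_i v + h v.
\]
Swapping orders of integration (via Fubini after a standard truncation/approximation of the singular kernel $D_y^2 G$) yields the representation
\[
u(x) = T_1 \mathbf f(x) - T_2 \boldsymbol g(x) + T_3 h(x),
\]
where $T_1$, $T_2$, $T_3$ are integral operators on $B$ with kernels of size $\abs{x-y}^{-d}$, $\abs{x-y}^{1-d}$, $\abs{x-y}^{2-d}$ respectively.

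\textbf{Step 2: Weak-$(1,1)$ for the three pieces.} The operator $T_1$ is a Calderón–Zygmund singular integral: its kernel $D_y^2 G(y,x)$ satisfies the standard size and smoothness bounds, and $T_1$ is $L^2$-bounded by the global $L^p$ theory used in Section \ref{sec2.3}. The classical Calderón–Zygmund theorem therefore gives
\[
t \Abs{\set{x \in B : \abs{T_1 \mathbf f(x)} > t}} \le C \norm{\mathbf f}_{L_1(B)}.
\]
For $T_2$ and $T_3$, the pointwise kernel bounds yield $\abs{T_2 \boldsymbol g(x)} \le C\, I_1(\abs{\boldsymbol g})(x)$ and $\abs{T_3 h(x)} \le C\, I_2(\abs{h})(x)$, where $I_\alpha$ denotes the Riesz potential. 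The Hardy–Littlewood–Sobolev weak-type estimate gives $I_\alpha : L^1 \to L^{d/(d-\alpha),\infty}$; since $B$ has finite measure, this embeds into $L^{1,\infty}(B)$ with an elementary comparison (if $\abs{\{\abs{g}>t\}} \le \min(\abs{B}, (C/t)^{q})$ with $q>1$, then $t\abs{\{\abs{g}>t\}} \le C \abs{B}^{1-1/q}$). This yields
\[
t \Abs{\set{x \in B : \abs{T_2 \boldsymbol g(x)} > t}} \le C \norm{\boldsymbol g}_{L_1(B)}, \qquad t \Abs{\set{x \in B : \abs{T_3 h(x)} > t}} \le C \norm{h}_{L_1(B)}.
\]

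\textbf{Step 3: Conclusion.} The standard inclusion $\{\abs{u}>t\} \subset \{\abs{T_1 \mathbf f}>t/3\} \cup \{\abs{T_2 \boldsymbol g}>t/3\} \cup \{\abs{T_3 h}>t/3\}$ combines the three estimates into the desired weak-type bound.

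\textbf{Main obstacle.} The technically delicate point is Step 2 for $T_1$: verifying that $D_y^2 G(y,x)$ is a genuine Calderón–Zygmund kernel on $B$ (size, Hörmander/regularity condition) and that the associated operator is $L^2$-bounded. The regularity condition follows from the interior smoothness of $G$ away from the diagonal together with classical Green's function estimates for constant-coefficient operators on smooth domains, and the $L^2$-bound is a direct consequence of the solvability in $\mathring W^2_2(B)$ of the dual Dirichlet problem. The reduction to the Laplacian via an affine map, which has bounded distortion depending only on $\delta$, is what keeps all constants of the form $C(d,\delta,p_0)$.
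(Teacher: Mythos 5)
The paper does not actually give a proof of this lemma; it defers entirely to \cite[Lemma 3.3]{CDKK24}, so there is nothing in the text to compare against line by line. Your reconstruction via the Dirichlet Green's function representation followed by the classical weak-$(1,1)$ theory --- Calder\'on--Zygmund for the second-derivative kernel $D^2_y G$, Hardy--Littlewood--Sobolev weak-type bounds for $D_y G$ and $G$, plus the elementary embedding of $L^{q,\infty}(B)$ into $L^{1,\infty}(B)$ on the finite-measure set --- is a correct and standard route for the constant-coefficient problem, and is essentially the argument one would expect the cited lemma to contain: after an affine change of variables $L_0$ becomes the Laplacian, and the three pieces $T_1$, $T_2$, $T_3$ are controlled exactly as you describe, with the $L^2$-boundedness of $T_1$ supplied by the $W^2_2$ solvability of the adjoint Dirichlet problem as you note. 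Two small remarks. First, the kernel bound $\abs{G(x,y)}\lesssim \abs{x-y}^{2-d}$ presumes $d\ge 3$; in $d=2$ the Green's function is logarithmic and the $T_3$ estimate needs the obvious one-line adjustment (the logarithmic kernel is in $L^q(B)$ for every $q<\infty$), though this is consistent with the paper's standing $d\ge 3$ assumption in the surrounding sections. Second, the constant $C(d,\beta,p_0)$ in the statement is surely a typographical slip for $C(d,\delta,p_0)$, as you correctly write; nothing in the constant-coefficient problem involves a H\"older exponent $\beta$.
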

\begin{proof}
Refer to \cite[Lemma 3.3]{CDKK24} for the proof when $p_0=2$.
The same proof works for any $p_0>1$.
\end{proof}

By Lemma \ref{lem-weak11-adj} with scaling, we have
\begin{multline*}
t\Abs{\set{x\in B_r(x_0): \abs{w(x)} > t}} \lesssim \norm{u}_{L_\infty(B_r(x_0))}\left( \int_{B_r(x_0)} \abs{\mathbf A-\bar{\mathbf A}} + r\int_{B_r(x_0)} \abs{\boldsymbol b} +r^2\int_{B_r(x_0)} \abs{c} \right)\\
+ \int_{B_r(x_0)} \abs{\mathbf f -\bar{\mathbf f}} 
+ r\int_{B_r(x_0)} \abs{\boldsymbol g}+ r^2\int_{B_r(x_0)} \abs{h}.
\end{multline*}
Therefore, by using \eqref{omega_coef} and \eqref{omega_dat}, we have
\begin{equation}				\label{eq1622thu}
\left(\fint_{B_r(x_0)} \abs{w}^{\frac12} \right)^{2} \le N \omega_{\rm coef}(r) \,\norm{u}_{L_\infty(B_r(x_0))} + N \omega_{\rm dat}(r).
\end{equation}
We remark that the exponent $\frac{1}{2}$ in the previous inequality is arbitrary and it can be replaced by any $p \in (0,1)$.
Note that $v$ is a weak solution of 
\[
\dv^2(\bar{\mathbf A} v) = \dv^2 \bar{\mathbf f}=0  \;\text{ in }B_r=B_r(x_0),
\]
and so is $v-c$ for any constant $c \in \bR$.
Since the coefficients $\bar{\mathbf A}$ are constants, we observe that $v-c$ is a classical solution.
Therefore, we have
\[
\norm{D v}_{L_\infty(B_{r/2})} \le N_0 r^{-1} \left(\fint_{B_r} \abs{v-c}^{\frac12} \right)^{2}
\]
and thus, for any $\kappa \in (0,\frac12)$, we obtain
\begin{equation}			\label{eq1616fri}
\left(\fint_{B_{\kappa r}} \abs{v - (v)_{B_{\kappa r}}}^{\frac12} \right)^{2} \le 2\kappa r \norm{D v}_{L^\infty(B_{r/2})} \le 2 N_0 \kappa \left(\fint_{B_r} \abs{v - c}^{\frac12} \right)^{2},
\end{equation}
where $N_0=N_0(d, \delta)$.
We set
\begin{equation}				\label{eq1100sat}
\varphi(x_0,r):=\inf_{c\in \bR}\left( \fint_{B_r(x_0)} \abs{u - c}^{\frac12} \,\right)^{2}.
\end{equation}

Since $u=v+w$, we obtain from the quasi triangle inequality, \eqref{eq1622thu}, and \eqref{eq1616fri}  that
\begin{align*}
\varphi(x_0,\kappa r) & \le \left(\fint_{B_{\kappa r}} \abs{u-(v)_{B_{\kappa r}}}^{\frac12}\right)^{2}
\le 2\left(\fint_{B_{\kappa r}} \abs{v-(v)_{B_{\kappa r}}}^{\frac12}\right)^{2} + 2 \left(\fint_{B_{\kappa r}} \abs{w}^{\frac12}\right)^{2}\\
& \le 4N_0 \kappa \left(\fint_{B_r} \abs{v-c}^{\frac12}\right)^{2} + N \kappa^{-2d}\left(\fint_{B_{r}} \abs{w}^{\frac12}\right)^{2}\\	
& \le 4N_0 \kappa \left(\fint_{B_r} \abs{u-c}^{\frac12}\right)^{2} + N \left(\kappa+\kappa^{-2d}\right)\left(\fint_{B_{r}} \abs{w}^{\frac12}\right)^{2}\\
& \le 4N_0 \kappa \left(\fint_{B_r} \abs{u-c}^{\frac12}\right)^{2} + N \left(\kappa+\kappa^{-2d}\right) \left(\omega_{\rm coef}(r) \,\norm{u}_{L_\infty(B_r)} + \omega_{\rm dat}(r)\right).
\end{align*}
Let $\beta \in (0,1)$ be an arbitrary but fixed number.
With a given $\beta$, we choose a number $\kappa=\kappa(d, \delta, \beta) \in (0, \frac12)$ such that $4N_0 \kappa \le  \kappa^{\beta}$.
Then, we get
\[
\phi(x_0, \kappa r) \le \kappa^\beta \phi(x_0, r) +N \norm{u}_{L_\infty(B_r(x_0))}\,\omega_{\rm coef}(r) + N \omega_{\rm dat}(r).
\]
Note that \eqref{eq1100sat} and H\"older's inequality implies
\[
\varphi(x_0,r) \le \frac{1}{r} \fint_{B_{r}(x_0)} \abs{u}.
\]
Moreover, by replicating the same argument as in \cite{DK17}, we obtain that
\[
\norm{u}_{L_\infty(B_2)} \le N \norm{u}_{L_1(B_4)} + N \int_0^{1} \frac{\tilde\omega_{\rm dat}(t)}t \,dt,
\]
where $N$ depends only on $d$, $\delta$, $\omega_{\rm coef}$.
Refer to \cite[p. 461]{DEK18} for the definition of $\tilde\omega_{\rm dat}(t)$.
Furthermore, we have
\begin{multline*}
\abs{u(x)-u(y)} \lesssim \norm{u}_{L_1(B_4)} \,\abs{x-y}^\beta\\
+\left(\norm{u}_{L_1(B_4)} + \int_0^{1} \frac{\tilde\omega_{\rm dat}(t)}t \,dt\right) \int_0^{\abs{x-y}} \frac{\tilde \omega_{\rm coef}(t)}t \,dt + \int_0^{\abs{x-y}} \frac{\tilde \omega_{\rm dat}(t)}t \,dt,
\end{multline*}
where $N$ depends only on $d$, $\delta$, $\beta$, $\omega_{\rm coef}$.
It is routine to verify that a similar estimates are also available with balls in \eqref{eq10.23m} and \eqref{eq14.00} are replaced by half balls.
By employing the boundary flattening method as in \cite{DEK18}, and using the partition of unity, we conclude that 
\begin{equation}					\label{eq10.23m}
\norm{u}_{L_\infty(\Omega)} \le N \left( \norm{u}_{L_1(\Omega)} + 1\right),
\end{equation}
where $N$ depends only on $d$, $\delta$, $\omega_{\rm coef}$, $\omega_{\rm dat}$, and $\Omega$.
Moreover, there exists moduli of continuity $\varrho_{\rm coef}$ and $\varrho_{\rm dat}$, determined by $\omega_{\rm coef}$ and $\omega_{\rm dat}$, respectively, such that we have
\begin{equation}				\label{eq14.00}
\abs{u(x)-u(y)} \le N \left( \norm{u}_{L_1(\Omega)} + 1\right)\varrho_{\rm coef}(\abs{x-y})+ N \varrho_{\rm dat}(\abs{x-y}),
\end{equation}
where $N$ is a constant depending only on $d$, $\delta$, $\omega_{\rm coef}$, $\omega_{\rm dat}$, and $\Omega$.

Now, we drop the assumption that $\boldsymbol{b}$ and  $c$ are bounded functions.
Let $\boldsymbol b_n$ and $c_n$ be smooth functions, converging to $\boldsymbol b$ and $c$, respectively, in $L_{1}(\Omega)$.
Additionally, we assume that $c_n \ge 0$.
Note that
\[
\omega_{\rm coef}^{(n)}(r):=  \omega_{\mathbf A}(r)+  r\sup_{x \in \Omega} \fint_{\Omega \cap B_r(x)}\abs{\boldsymbol b_n}+ r^2\sup_{x \in \Omega} \fint_{\Omega \cap B_r(x)}\abs{c_n}
\]
satisfies $\omega_{\rm coef}^{(n)}(r) \le N \omega_{\rm coef}(r)$ uniformly in $n$.

Consider the problem
\begin{equation}			\label{eq1317tue}
\left\{
\begin{aligned}
\dv^2(\mathbf A u_n)-\dv(\boldsymbol b_n u_n)-c_nu&=\dv^2 \mathbf{f} +\dv \boldsymbol{g}+h\;\text{ in }\;\Omega,\\
(\mathbf A \nu\cdot \nu)u&=\mathbf f \nu \cdot \nu\;\text{ on }\;\partial \Omega.
\end{aligned}
\right.
\end{equation}
Let $p \in (1, d/2)$ satisfy the hypothesis of \cite[Theorem 2.8]{Krylov2023e}.
Then, for any $\varphi  \in L_{p}(\Omega)$, there exists a unique $v_n \in \mathring{W}^2_{p}(\Omega)$ such that
\[
\mathrm{tr}(\mathbf A D^2 v_n)+\boldsymbol b_n \cdot  D v_n -c_n v_n=\varphi\;\text{ in }\;\Omega.
\]
Moreover, we have 
\begin{equation}			\label{eq2105mon}
\norm{v_n}_{W^2_p(\Omega)} \le N \norm{\varphi}_{L_p(\Omega)},
\end{equation}
where $N$ is independent of $n$.
By the same argument as in Section \ref{sec2.2}, there exists a unique $u_n \in L_{p'}(\Omega)$ that solves the problem \eqref{eq1317tue}.
It follows from \eqref{eq2105mon} and the H\"older's inequality that
\begin{equation}			\label{eq1321tue}
\norm{u_n}_{L_{1}(\Omega)} \le \abs{\Omega}^{1/p} \norm{u_n}_{L_{p'}(\Omega)} \le N \abs{\Omega}^{1/p}.
\end{equation}
The estimate \eqref{eq1321tue} together with \eqref{eq10.23m} and \eqref{eq14.00} imply that $u_n$ is uniformly bounded and equicontinuous.
Therefore, there is a subsequence of $u_n$ that converges uniformly to a function $u$ satisfying the estimates \eqref{eq10.23m} and \eqref{eq14.00}.
Since each $u_n$ satisfies
\[
\int_{B_4} u_n\left(a^{ij}D_{ij}\eta + b^i_n D_i \eta -c_n\eta\right) =\int_{B_4} f^{ij} D_{ij}\eta - g^i D_i \eta+h \eta,
\]
for any $\eta \in C^\infty(\overline \Omega)$ satisfying $\eta =0$ on $\partial \Omega$, and 
$\boldsymbol b_n \to \boldsymbol b$, and $c_n \to c$  in $L_1(\Omega)$, 
it is straightforward to see that $u$ is a weak solution of the problem
\begin{equation}			\label{eq1540tue}
\left\{
\begin{aligned}
\dv^2(\mathbf A u)-\dv(\boldsymbol b u)-cu&=\dv^2 \mathbf{f} +\dv \boldsymbol{g}+h\;\text{ in }\;\Omega,\\
(\mathbf A \nu\cdot \nu)u&=\mathbf f \nu \cdot \nu\;\text{ on }\;\partial \Omega.
\end{aligned}
\right.
\end{equation}
It is clear that $u$ coincides with the weak solution in $L_{p'}(\Omega)$ of the same problem, the existence of which follows from the same argument as that used for $u_n$.
We have proved the following theorem.

\begin{theorem}
Let $\Omega$ be a bounded $C^{1,1}$ domain in $\mathbb{R}^d$ with $d\ge 3$.
Suppose that $c \ge 0$ and Assumption 2.6 in \cite{Krylov2023e} is satisfied.
Let $\mathbf f \in \mathrm{DMO}(\Omega)$, $\boldsymbol g \in L_{q_0}(\Omega)$, and $h \in L_{q_0/2}(\Omega)$, where $q_0>d$.
Assume that $\omega_{\rm coef}(r)$ as defined in \eqref{omega_coef} satisfies the Dini condition.
Then there exists a unique solution $u \in C(\overline \Omega)$ of the problem \eqref{eq1540tue}.
\end{theorem}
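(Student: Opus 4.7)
The plan is to combine the a priori modulus of continuity estimates \eqref{eq10.23m}--\eqref{eq14.00} derived in this section with an approximation argument in the lower-order coefficients, followed by a compactness step. The key observation is that the qualitative hypothesis that $\boldsymbol b$ and $c$ are bounded was invoked only to secure existence of a solution; the estimates themselves depend on the lower-order coefficients \emph{only} through the Dini modulus $\omega_{\rm coef}$, so they transfer to approximating problems with uniform constants.

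First I would mollify to obtain smooth $\boldsymbol b_n$ and $c_n$ with $c_n \ge 0$, $\boldsymbol b_n \to \boldsymbol b$ and $c_n \to c$ in $L_1(\Omega)$, and $\omega_{\rm coef}^{(n)}(r) \le N\,\omega_{\rm coef}(r)$ uniformly in $n$; this last inequality follows from Jensen's inequality applied to the averages in \eqref{omega_coef}. For each fixed $n$, \cite[Theorem 2.8]{Krylov2023e} remains applicable (Assumption 2.6 is satisfied for $\boldsymbol b_n$, $c_n$ with bounds independent of $n$), and the duality construction of Section \ref{sec2.2} produces a unique weak solution $u_n \in L_{p'}(\Omega)$ of the approximate problem \eqref{eq1317tue} for a suitable $p \in (1, d/2)$, together with a uniform bound $\norm{u_n}_{L_{p'}(\Omega)} \le N$ and consequently $\norm{u_n}_{L_1(\Omega)} \le N \abs{\Omega}^{1/p}$.

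Since $\boldsymbol b_n$ and $c_n$ are bounded for each $n$, the interior--boundary modulus of continuity estimates \eqref{eq10.23m} and \eqref{eq14.00} apply to $u_n$ with constants depending only on $d$, $\delta$, $\omega_{\rm coef}^{(n)}$, $\omega_{\rm dat}$, and $\Omega$, hence uniformly in $n$. Thus $\{u_n\}$ is uniformly bounded and equicontinuous on $\overline\Omega$, and Arzelà--Ascoli extracts a subsequence converging uniformly to some $u \in C(\overline\Omega)$ inheriting both estimates. Passing to the limit in the weak formulation is routine: for any admissible test function $\eta$, uniform convergence of $u_n$ paired with $L_1$-convergence of $\boldsymbol b_n$ and $c_n$ handles the lower-order integrals, while the top-order term is linear in $u_n$. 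Uniqueness follows from the same duality: the difference of two solutions, tested against $\mathring W^2_{p}(\Omega)$-solutions of $Lv-cv=\varphi$ for arbitrary $\varphi \in L_{p'}(\Omega)$, is forced to vanish.

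The main point to guard against is the quantitative dependence of \eqref{eq14.00} on the lower-order coefficients. It is essential that the constant there depend only on $\omega_{\rm coef}$ and $\omega_{\rm dat}$, and not, say, on $\norm{\boldsymbol b}_\infty$ or $\norm{c}_\infty$; otherwise the approximation scheme collapses. This is precisely what the Campanato-type iteration of $\varphi(x_0,r)$ in \eqref{eq1100sat} delivers, but the half-ball boundary analogue --- needed to glue the interior and boundary estimates via partition of unity --- must be verified with the same uniform dependence on $\omega_{\rm coef}$ alone.
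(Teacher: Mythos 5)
Your proposal follows the paper's argument essentially verbatim: mollify $\boldsymbol b$ and $c$ to bounded approximations with uniformly Dini-controlled $\omega_{\rm coef}^{(n)}$, invoke \cite[Theorem 2.8]{Krylov2023e} and the duality construction to produce $u_n$ with a uniform $L_1$ bound, apply the a priori estimates \eqref{eq10.23m}--\eqref{eq14.00} (which you correctly observe depend only on $\omega_{\rm coef}$ and $\omega_{\rm dat}$, not on $\|\boldsymbol b\|_\infty$ or $\|c\|_\infty$) to get equicontinuity, extract a uniform limit by Arzel\`a--Ascoli, and pass to the limit in the weak formulation. Your closing caveat about the half-ball boundary version of the Campanato iteration matches the paper's own remark, and your duality argument for uniqueness is the same mechanism the paper uses when it identifies the limit with the unique $L_{p'}$ solution.
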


\section{Parabolic equations in double divergence form}			\label{sec4}

We will also consider a parabolic operator in double divergence form given by
\begin{equation*}			
\mathscr{L}^*u:=-\partial_t u + \dv^2 (\mathbf A u) -\dv(\boldsymbol{b} u) = -\partial_t u + D_{ij}(a^{ij} u) - D_i(b^i u).
\end{equation*}
We again note that the operator $\mathscr{L}^*$ is the formal adjoint of $\mathscr{L}$, where
\[
\mathscr{L} v := \partial_t v + a^{ij}D_{ij}v + b^i D_i v.
\]
We assume that the principal coefficient matrix $\mathbf{A} = (a^{ij})$ is symmetric, and there exists a constant $\delta \in (0, 1]$ such that the eigenvalues of $\mathbf{A}(t, x)$ are uniformly bounded in the interval $[\delta, 1/\delta]$ for all $(t, x) \in \mathbb{R}^{d+1}$.

Let $\tilde C_r(t,x)=[t, t+r^2)\times B_r(x)$ and let $\mathbb C_r$ be the collection of all cylinders $\tilde C_r(t,x)$ in $\bR^{d+1}$.
For $C=\tilde C_r(t_0,x_0) \in \mathbb C_r$, we set
\[
\bar{\mathbf A}^{x}_{C}(t)=\fint_{B_r(x_0)} \mathbf A(t,x)\,dx,
\]
and for $r>0$, we define
\[
\omega_{\mathbf A}^{x}(r):=\sup_{C \in \mathbb{C}_r}\fint_{C} \,\Abs{\mathbf A(t,x)- \bar{\mathbf A}^{x}_{C}(t)}\,dxdt.
\]
We assume that $\mathbf{A}$ is of Dini mean oscillation with respect to the spatial variable $x$, 
characterized by:
\[
\int_0^{1} \frac{\omega_{\mathbf A}^{x}(t)}{t}\,dt <\infty.
\]

We denote $\mathbf{A} \in \mathrm{DMO}_x$ if $\mathbf{A}$ is of Dini mean oscillation with respect to $x$. It is worth noting that if $\mathbf{A}$ belongs to $\mathrm{DMO}_x$, then it also belongs to the space $\mathrm{VMO}_x$. For the definition of $\mathrm{VMO}_x$, refer to \cite{Krylov2007}.
We allow the coefficient $\boldsymbol b$ and $c$ to be singular.

\subsection{Definition of weak solutions}
For a domain $\Omega$ in $\mathbb{R}^{d}$, and numbers $-\infty \le t_0 <t_1 \le \infty$, we denote
\begin{equation}			\label{eq0819thu}
\Omega_{t_0,t_1}:=(t_0,t_1) \times \Omega,\quad \partial_x\Omega_{t_0,t_1}:=(t_0,t_1) \times \partial\Omega,\quad \partial_{x,t} \Omega_{t_0,t_1}:= \partial_x\Omega_{t_0,t_1} \cup \{t_1\}\times \Omega.
\end{equation}
\begin{definition}
Let $\Omega$ be a bounded $C^{1,1}$ domain in $\mathbb{R}^d$.
Let $\mathbf f=(f^{ij})$, $\boldsymbol g=(g^1,\ldots, g^d)$, $h \in L_{1}(\Omega_{t_0,t_1})$, and $\psi \in L_{1}(\partial_x \Omega_{t_0,t_1})$.
We say that $u\in L_1(\Omega_{t_0,t_1})$ is a weak solution of
\begin{align*}
&\mathscr{L}^*u-cu=\dv^2 \mathbf{f}+\dv \boldsymbol g+h \;\text{ in }\;(t_0,t_1)\times \Omega,\\
&u=\frac{\mathbf f \nu \cdot \nu}{\mathbf A \nu\cdot \nu}+\psi \;\text{ on }\;(t_0,t_1) \times \partial\Omega,\quad
u(t_0,\cdot)=0\;\text{ on }\;\Omega,
\end{align*}
if $\boldsymbol b u \in L_1(\Omega_{t_0,t_1})$, $cu \in L_1(\Omega_{t_0,t_1})$, and for every $\eta \in C^\infty(\overline \Omega_{t_0,t_1})$ satisfying $\eta=0$ on $\partial_{x,t} \Omega_{t_0,t_1}$, we have
\begin{multline}				\label{eq1029thu}
\int_{t_0}^{t_1}\!\!\!\int_{\Omega} u\left(\partial_t \eta+ a^{ij}D_{ij}\eta + b^i D_i \eta -c\eta\right) \\
=\int_{t_0}^{t_1}\!\!\!\int_{\Omega} f^{ij}D_{ij}\eta - g^i D_i \eta+h \eta+\int_{t_0}^{t_1}\!\!\!\int_{\partial\Omega} \psi a^{ij}D_j\eta \nu_i.
\end{multline}
Let $Q$ be a domain in $\bR^{d+1}$.
Let $\mathbf f$, $\boldsymbol g$, $h \in L_{1,\rm{loc}}(Q)$.
We say that $u \in L_{1,\rm{loc}}(Q)$ is a weak solution of
\[
\mathscr{L}^*u-cu=\dv^2 \mathbf f+\dv \boldsymbol g +h\;\text{ in }\;Q,
\]
if $\boldsymbol b u \in L_{1,\rm{loc}}(Q)$, $cu \in L_{1,\rm{loc}}(Q)$, and for $\eta \in C^\infty_c(Q)$, we have
\[
\int_{Q} u\left(\partial_t \eta+ a^{ij}D_{ij}\eta + b^i D_i \eta -c\eta\right) =\int_{Q} f^{ij}D_{ij}\eta - g^i D_i \eta+h \eta.
\]
\end{definition}

\subsection{Existence of weak solutions}
We are looking for a weak solution $u \in L_{q_0}(\Omega_{t_0,t_1})$ satisfying \eqref{eq1029thu}, where $q_0 >(d+2)/2$.
We assume that $\boldsymbol b \in L_{p_0}(\Omega_{t_0,t_1})$, $c \in L_{p_0/2}(\Omega_{t_0,t_1})$ with $p_0>d+2$. 
We will utilize the unique solvability in  $\mathring W^{1,2}_{q_0'}(\Omega_{t_0,t_1})$ of the problem
\[
\mathscr{L} v - cv =\varphi \;\text{ in }\;\Omega_{t_0,t_1},\quad v=0\;\text{ on }\;\partial_{x,t}\Omega_{t_0,t_1}.
\]
Here, $\mathring W^{1,2}_{p}(\Omega)$ denotes the completion of the set $\Set{u \in C^\infty(\overline \Omega_{t_0,t_1}): u=0 \text{ on }\partial_{x,t}\Omega_{t_0,t_1}}$ in  the norm of $W^{1,2}_p$.
For $q_0' \in (1,(d+2)/2)$, let $\mathbf f \in L_{q_0}(\Omega_{t_0,t_1})$, $\boldsymbol g \in L_{(d+2)q_0/(d+2+q_0)}(\Omega_{t_0,t_1})$,  $h \in L_{(d+2)q_0/(d+2+2q_0)}(\Omega_{t_0,t_1})$, and $\psi \in L_{q_0}(\partial\Omega)$.

Consider the mapping $T:L_{q_0'}(\Omega_{t_0,t_1})\to \mathbb{R}$ given by
\[
T(\varphi)=\int_{\Omega_{t_0,t_1}} f^{ij}D_{ij} v- \int_{\Omega_{t_0,t_1}}a g^i D_i v + \int_{\Omega_{t_0,t_1}} hv+\int_{\partial_x\Omega_{t_0,t_1}} \psi a^{ij}D_jv\nu_i,
\]
where $v \in \mathring W^{1,2}_{q_0'}(\Omega)$ is a strong solution of $\mathscr{L}v -c v =\varphi$.
We observe that 
\begin{align*}
\Abs{\int_{\Omega_{t_0,t_1}} f^{ij}D_{ij} v\,} & \le \norm{\mathbf f}_{L_{q_0}(\Omega_{t_0,t_1})} \norm{D^2 v}_{L_{q_0'}(\Omega_{t_0,t_1})}\le 
N \norm{\mathbf f}_{L_{q_0}(\Omega_{t_0,t_1})} \norm{\varphi}_{L_{q_0'}(\Omega_{t_0,t_1})},\\
\Abs{\int_{\Omega_{t_0,t_1}} g^{i}D_{i} v\,} &\le \norm{\boldsymbol g}_{L_{(d+2)q_0/(d+2+q_0)}(\Omega_{t_0,t_1})} \norm{D v}_{L_{q_0'(d+2)/(d+2-q_0')}(\Omega_{t_0,t_1})}\\
&\le N \norm{\boldsymbol g}_{L_{(d+2)q_0/(d+2+q_0)}(\Omega_{t_0,t_1})} \norm{\varphi}_{L_{q_0'}(\Omega_{t_0,t_1})},\\
\Abs{\int_{\Omega_{t_0,t_1}} h v\,} &\le \norm{h}_{L_{(d+2)q_0/(d+2+2q_0)}(\Omega_{t_0,t_1})} \norm{v}_{L_{q_0'(d+2)/(d+2-2q_0')}(\Omega_{t_0,t_1})}\\
&\le N \norm{h}_{L_{(d+2)q_0/(d+2+2q_0)}(\Omega_{t_0,t_1})} \norm{\varphi}_{L_{q_0'}(\Omega_{t_0,t_1})},\\
\Abs{\int_{\partial_x\Omega_{t_0,t_1}} \psi  a^{ij}D_jv \nu_i\,} &\le N  \norm{\psi}_{L_{q_0}(\partial_x\Omega_{t_0,t_1})} \norm{Dv}_{W^{1,1}_{q_0'}(\Omega)}\le N \norm{\psi}_{L_{q_0}(\partial_x\Omega_{t_0,t_1})} \norm{\varphi}_{L_{q_0'}(\Omega_{t_0,t_1})}.
\end{align*}
Here, we used the anisotropic Sobolev embedding theorem and the trace theorem.
Therefore, $T$ is a bounded functional on $L_{q_0'}(\Omega_{t_0,t_1})$, and by the Riesz representation theorem, there is unique $u \in L_{q_0}(\Omega_{t_0,t_1})$ such that
\[
T(\varphi)=\int_{\Omega_{t_0,t_1}} u \varphi,\quad \forall \varphi \in L_{q_0'}(\Omega_{t_0,t_1}).
\]
Moreover, we have
\begin{multline}			\label{eq1136wed}
\norm{u}_{L_{q_0}(\Omega_{t_0,t_1})}
 \lesssim \norm{\mathbf f}_{L_{q_0}(\Omega_{t_0,t_1})} + \norm{\boldsymbol g}_{L_{(d+2)q_0/(d+2+q_0)}(\Omega_{t_0,t_1})}\\
+ \norm{h}_{L_{(d+2)q_0/(d+2+2q_0)}(\Omega_{t_0,t_1})}+\norm{\psi}_{L_{q_0}(\partial_x\Omega_{t_0,t_1})}.
\end{multline}
We have proved the following theorem.
\begin{theorem}
Let $\Omega$ be a bounded $C^{1,1}$ domain in $\mathbb{R}^d$, and recall notations in \eqref{eq0819thu}.
Let $\mathbf A \in \mathrm{VMO}_x$, $\boldsymbol b \in L_{p_0}(\Omega_{t_0,t_1})$, and $c \in L_{p_0/2}(\Omega_{t_0,t_1})$ with $p_0>d+2$.
For $q_0 >(d+2)/d$, let $\mathbf f \in L_{q_0}(\Omega_{t_0,t_1})$, $\boldsymbol g \in L_{(d+2)q_0/(d+2+q_0)}(\Omega_{t_0,t_1})$, $h \in L_{(d+2)q_0/(d+2+2q_0)}(\Omega_{t_0,t_1})$, and $\psi \in L_{q_0}(\partial\Omega)$.
There exists a unique weak solution $u \in L_{q_0}(\Omega_{t_0,t_1})$ of the problem
\begin{align*}
&\mathscr{L}^*u-cu=\dv^2 \mathbf{f}+\dv \boldsymbol g+h \;\text{ in }\;\Omega_{t_0,t_1},\\
&u=\frac{\mathbf f \nu \cdot \nu}{\mathbf A \nu\cdot \nu}+\psi \;\text{ on }\; \partial_x \Omega_{t_0,t_1},\quad
u(t_0,\cdot)=0\;\text{ on }\;\Omega,
\end{align*}
satisfying the estimate \eqref{eq1136wed}.
Moreover, the identity \eqref{eq1029thu} holds with any $\eta \in \mathring W^{1,2}_{q_0'}(\Omega)$.
\end{theorem}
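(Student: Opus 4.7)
The plan is to follow the transposition (duality) strategy used in the elliptic case of Section \ref{sec2.2}, replacing the elliptic adjoint $L$ by the forward parabolic operator $\mathscr{L}$ acting on the dual Sobolev space $\mathring W^{1,2}_{q_0'}(\Omega_{t_0,t_1})$, with the terminal-time condition $v(t_1,\cdot)=0$ and the spatial boundary condition $v=0$ on $\partial_x\Omega_{t_0,t_1}$ built into this space. The critical input is a parabolic analog of \cite[Theorem 2.8]{Krylov2023e}: for $\varphi\in L_{q_0'}(\Omega_{t_0,t_1})$, the problem $\mathscr{L}v - cv=\varphi$ in $\Omega_{t_0,t_1}$, $v=0$ on $\partial_{x,t}\Omega_{t_0,t_1}$, has a unique solution in $\mathring W^{1,2}_{q_0'}(\Omega_{t_0,t_1})$ with the estimate $\|v\|_{W^{1,2}_{q_0'}(\Omega_{t_0,t_1})}\le N\|\varphi\|_{L_{q_0'}(\Omega_{t_0,t_1})}$. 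This is where the assumptions $\mathbf A\in\mathrm{VMO}_x$, $\boldsymbol b\in L_{p_0}$, $c\in L_{p_0/2}$ with $p_0>d+2$, and $c\ge 0$ (or at least no negative part problems) enter, ensuring the maximum principle and $L_p$-solvability for the forward operator on the backward-in-time Cauchy--Dirichlet problem.

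Once this adjoint solvability is granted, I define the linear functional $T:L_{q_0'}(\Omega_{t_0,t_1})\to\mathbb{R}$ by the displayed formula, with $v$ the unique solution to the adjoint problem. To verify that $T$ is bounded, I estimate each of the four summands by H\"older and the relevant embeddings. For $\int f^{ij}D_{ij}v$, H\"older with conjugate pair $(q_0,q_0')$ together with the $W^{1,2}_{q_0'}$ estimate suffices. For $\int g^i D_i v$ and $\int hv$, I use the anisotropic parabolic Sobolev embeddings
\[
W^{1,2}_{q_0'}(\Omega_{t_0,t_1})\hookrightarrow L_{q_0'(d+2)/(d+2-2q_0')}(\Omega_{t_0,t_1}),\qquad \text{gradients into } L_{q_0'(d+2)/(d+2-q_0')},
\]
which are valid since $q_0'\in(1,(d+2)/2)$; then H\"older pairs the exponents with $(d+2)q_0/(d+2+q_0)$ and $(d+2)q_0/(d+2+2q_0)$ respectively. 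For the boundary term $\int_{\partial_x}\psi\, a^{ij}D_jv\,\nu_i$, I combine the trace theorem on the lateral boundary with $\|Dv\|_{W^{1,0}_{q_0'}}\lesssim \|v\|_{W^{1,2}_{q_0'}}$ to control it by $\|\psi\|_{L_{q_0}(\partial_x\Omega_{t_0,t_1})}\|\varphi\|_{L_{q_0'}}$.

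With $T$ identified as a bounded functional on $L_{q_0'}$, the Riesz representation theorem gives a unique $u\in L_{q_0}(\Omega_{t_0,t_1})$ with $T(\varphi)=\int u\varphi$, and the operator norm of $T$ directly yields estimate \eqref{eq1136wed}. To see that this $u$ is actually a weak solution in the sense of Definition, for any $\eta\in C^\infty(\overline{\Omega_{t_0,t_1}})$ vanishing on $\partial_{x,t}\Omega_{t_0,t_1}$ I set $\varphi:=\mathscr{L}\eta-c\eta=\partial_t\eta+a^{ij}D_{ij}\eta+b^iD_i\eta-c\eta$; such $\varphi$ lies in $L_{q_0'}$ and $\eta$ is itself the strong solution of the adjoint problem, so plugging into $T(\varphi)=\int u\varphi$ reproduces \eqref{eq1029thu}. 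The same argument justifies the extension of the admissible test class to $\eta\in\mathring W^{1,2}_{q_0'}(\Omega_{t_0,t_1})$ by density, using the uniform $W^{1,2}_{q_0'}$-bound on approximating smooth functions. Uniqueness of $u$ in $L_{q_0}$ follows because any difference of two such solutions must annihilate every $\varphi\in L_{q_0'}$ via the test function $v$.

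The main obstacle, as in the elliptic case, is not any single step but the quotability of the parabolic adjoint well-posedness with singular lower-order coefficients in the mixed-norm-free setting of $L_{p_0}$ drift and $L_{p_0/2}$ potential; once that Krylov-type result is in hand, the duality, the embedding accounting, and the verification are essentially bookkeeping. A secondary subtlety is the sharp form of the trace theorem on the lateral boundary of a parabolic cylinder, which requires the exponent arithmetic $1/p_0+\cdots$ to close correctly in the critical range $q_0>(d+2)/d$; if it does not, one must either raise $q_0$ slightly and interpolate or invoke an anisotropic Besov-space trace result.
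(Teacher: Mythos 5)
Your proposal is correct and follows essentially the same transposition/duality route as the paper: solve the adjoint Cauchy--Dirichlet problem $\mathscr{L}v-cv=\varphi$ with $v=0$ on $\partial_{x,t}\Omega_{t_0,t_1}$ in $\mathring W^{1,2}_{q_0'}$, bound the functional $T$ termwise via H\"older, the anisotropic Sobolev embeddings, and the lateral trace theorem, then invoke Riesz representation and verify \eqref{eq1029thu} by choosing $\varphi=\mathscr{L}\eta-c\eta$. The only remark worth adding is that the paper leans on the parabolic $W^{1,2}_p$-solvability for $\mathrm{VMO}_x$ leading coefficients with $L_{p_0}$ drift and $L_{p_0/2}$ zeroth-order term without giving an explicit citation at this point, so your caution about the quotability of that ingredient is warranted, but it does not change the argument.
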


\subsection{Higher integrability of weak solutions}
For $X_0=(t_0, x_0) \in \bR^{d+1}$ and $r>0$ we define the parabolic cylinders
\[
Q_r(X_0)=(t_0-r^2,t_0+r^2)\times B_r(x_0).
\]
Let $u$ be a weak solution of 
\[
\mathscr{L}^*u-cu=\dv^2 \mathbf f+\dv \boldsymbol g +h\;\text{ in }\;Q_2,
\]
where $\mathbf f \in L_{q_0}(\Omega_{t_0,t_1})$, $\boldsymbol g \in L_{(d+2)q_0/(d+2+q_0)}(\Omega_{t_0,t_1})$,  and $h \in L_{(d+2)q_0/(d+2+2q_0)}(\Omega_{t_0,t_1})$ for some $q_0' \in (1,(d+2)/2)$.
By definition of a weak solution, we have
\begin{equation}				\label{eq1602wed}
\int_{Q_2} u \left(\partial_t \eta +a^{ij} D_{ij} \eta+b^i D_i \eta -c\eta \right)=\int_{Q_2} f^{ij} D_{ij}\eta+g^i D_i \eta -h\eta,\quad \forall \eta \in C^{\infty}_c(Q_2).
\end{equation}
For any $\varphi \in C^{\infty}_c(Q_2)$, let $v \in W^{1,2}_p(Q_2)$ be the solution of the problem
\begin{equation}				\label{eq1603wed}
v_t +a^{ij}D_{ij} v = \varphi \;\text{ in }\;Q_2,
\end{equation}
with zero boundary condition on the parabolic boundary $\partial_{x,t}Q_2$.
By \cite[Theorem 3.2]{DEK21}, we have $v \in W^{1,2}_\infty(Q_{r})$ for any $r \in (0,2)$.
Let $v_{\varepsilon}$ be the standard mollification of $v$.
For any fixed cut-off function $\zeta \in C^\infty_c(Q_{r})$ such that $\zeta=1$ on $Q_{\rho}$, where $1<\rho<r<2$, we take $\eta=\zeta v_{\varepsilon} \in C^{\infty}_c(Q_2)$.
Then by \eqref{eq1602wed}, we have
\begin{multline}			\label{eq1450thu}
\int_{Q_2} u (\partial_t\zeta v_{\varepsilon} + \zeta \partial_t v_{\varepsilon}) +ua^{ij} (D_{ij} \zeta v_{\varepsilon} + 2 D_i\zeta D_j v_{\varepsilon} + \zeta D_{ij}v_{\varepsilon})+ ub^i (D_{i} \zeta v_{\varepsilon}+\zeta D_i v_{\varepsilon})-cu\zeta v_{\varepsilon} \\
=\int_{Q_2} f^{ij}(D_{ij} \zeta v_{\varepsilon} + 2 D_i\zeta D_j v_{\varepsilon} 
+ \zeta D_{ij}v_{\varepsilon})+g^i (D_{i} \zeta v_{\varepsilon}+\zeta D_i v_{\varepsilon}) +h \zeta v_{\varepsilon}.
\end{multline}
There is a sequence $\varepsilon_n$ converging to $0$ such that
\[
v_{\varepsilon_n} \to v, \quad D v_{\varepsilon_n} \to Dv,\quad D^2 v_{\varepsilon_n} \to D^2 v, \quad \partial_t v_{\varepsilon_n} \to \partial_t v\quad\text{a.e. in }\;Q_r.
\]
Also, note that for all $\varepsilon \in (0,\frac{2-r}{2})$ we have
\[
\norm{v_{\varepsilon}}_{W^{1,2}_\infty(Q_{r})} \le \norm{v}_{W^{1,2}_\infty(Q_{(2+r)/2})}<\infty.
\]
Hence, by using the dominated convergence theorem, we find that \eqref{eq1450thu} is valid with $v_{\varepsilon}$ replaced by $v$, that is,
\begin{multline}			\label{eq1148sat}
\int_{Q_2} u (\partial_t\zeta v+ \zeta \partial_t v) +ua^{ij} (D_{ij} \zeta v + 2 D_i\zeta D_j v + \zeta D_{ij}v)+ ub^i (D_{i} \zeta v+\zeta D_i v)-cu\zeta v\\
=\int_{Q_2} f^{ij}(D_{ij} \zeta v+ 2 D_i\zeta D_j v
+ \zeta D_{ij}v)+g^i (D_{i} \zeta v+\zeta D_i v) +h \zeta v.
\end{multline}

On the other hand, by multiplying $\zeta u$ to \eqref{eq1603wed} and noting that $v_t$, $D^2 v$, $\varphi \in L_\infty(Q_r)$ and $\zeta u \in L_1(Q_2)$, we have
\begin{equation}			\label{eq1627thu}
\int_{Q_2} v_t \zeta u + a^{ij} D_{ij}v \zeta u  = \int_{Q_2} \varphi \zeta u.
\end{equation}
By combing \eqref{eq1148sat} and \eqref{eq1627thu}, we obtain
\begin{multline}				\label{eq1652sat}
\int_{Q_2} \zeta u \varphi =  \int_{Q_2} f^{ij}(D_{ij} \zeta v+ 2 D_i\zeta D_j v+ \zeta D_{ij}v)+g^i (D_{i} \zeta v+\zeta D_i v) +h \zeta v\\
-\int_{Q_2} u(\partial_t\zeta v + a^{ij} D_{ij}\zeta v+ 2a^{ij} D_i\zeta D_jv) + b^i u(D_i\zeta v+\zeta D_i v)-cu \zeta v.
\end{multline}
Note that we can choose $\zeta$ such that
\[
\norm{\zeta}_\infty \le1,\quad \norm{D \zeta}_\infty  \le 2/(r-\rho),\quad \norm{\partial_t \zeta}_\infty+\norm{D^2 \zeta}_\infty \le 8/(r-\rho)^2.
\]
Take $r=r_0$ and $\rho=r_1$, where $1<r_0<r_1<2$.
It follows from \eqref{eq1148sat} that
\begin{multline}			\label{eq1435sat}
\Abs{\int_{Q_2} \zeta u \varphi\,} \lesssim 
\left(\norm{\mathbf f}_{L_{q_0}(Q_2)} +\norm{\boldsymbol g}_{L_{(d+2)q_0/(d+2+q_0)}(Q_2)} +\norm{h}_{L_{(d+2)q_0/(d+2+2q_0)}(Q_2)}\right)\norm{v}_{W^{1,2}_{q_0'}(Q_2)}\\
+\norm{u}_{L_1(Q_2)}\norm{v}_{W^{0,1}_\infty(Q_2)} + \norm{\boldsymbol b u}_{L_1(Q_2)} \norm{v}_{W^{0,1}_\infty(Q_2)}+\norm{cu}_{L_1(Q_2)} \norm{v}_{L_\infty(Q_2)}.
\end{multline}
where $A \lesssim B$ means $A \le N(r_1-r_0)^{-2} B$ and $\norm{v}_{W^{0,1}_\infty(Q_2)} :=\norm{v}_{L_\infty(Q_2)} +\norm{Dv}_{L_\infty(Q_2)}$.

Since $a^{ij} \in \mathrm{DMO}_x \subset \mathrm{VMO}_x$, we can apply the standard $L_p$ theory, that is, for any $p \in (1,\infty)$, we have
\begin{equation}			\label{eq1508wed}
\norm{v}_{W^{1,2}_p(Q_2)} \le N \norm{\varphi}_{L_p(Q_2)}.
\end{equation}
Then it follows from the Sobolev embedding (see, for instance, \cite{LSU}) that for any $p_1' \in (d+2,\infty)$, we have
\begin{equation}			\label{eq0944sat}
\norm{v}_{L_\infty(Q_2)}+ \norm{Dv}_{L_\infty(Q_2)} \le N\norm{v}_{W^{1,2}_{p_1'}(Q_2)}\le  N \norm{\varphi}_{L_{p_1'}(Q_2)}.
\end{equation}
Note that $q_0'<p_1'$.
Since $g \in C^\infty_c(Q_2)$ is arbitrary, by utllizing \eqref{eq1435sat}, \eqref{eq0944sat}, and applying the converse of H\"older's inequality, we have
\begin{multline*}
\norm{u}_{L_{p_1}(Q_{r_1})}\le \norm{\zeta u}_{L_{p_1}(Q_2)}
\lesssim \norm{\mathbf f}_{L_{q_0}(Q_2)} +\norm{\boldsymbol g}_{L_{(d+2)q_0/(d+2+q_0)}(Q_2)} +\norm{h}_{L_{(d+2)q_0/(d+2+2q_0)}(Q_2)}\\
+\norm{u}_{L_1(Q_2)}+\norm{\boldsymbol b u}_{L_1(Q_2)}+ \norm{cu}_{L_1(Q_2)},\quad \forall p_1 \in (1, (d+2)/(d+1)).
\end{multline*}

We can iterate this process.
Let $k$ be the largest integer such that
\[
1/(d+2)+k(1/(d+2)-1/p_0)\le 1/q_0'.
\]
We take $p_1'>d+2$ sufficiently close to $d+2$ such that
\[
1/p_1'+k(1/(d+2)-1/p_0)<1/q_0'\quad\text{and}\quad 1/p_1'+(k+1)(1/(d+2)-1/p_0)>1/q_0'.
\]
For $j=1,\ldots,k$, we set
\[
1/p_{j}'=1/p_1'+(j-1)(1/(d+2)-1/p_0),\quad 1/s_{j}=1/p_{j}'-1/p_0,\quad 1/s_{j}^*=1/s_{j}-1/(d+2).
\]
Taking $r_{j+1}$ and $r_j$ in the place of $\rho$ and $r$, and going back to \eqref{eq1652sat}, we see that
\begin{multline}			\label{eq1644sat}	
\Abs{\int_{Q_2} \zeta u \varphi\,} \lesssim
\left(\norm{\mathbf f}_{L_{q_0}(Q_2)} +\norm{\boldsymbol g}_{L_{(d+2)q_0/(d+2+q_0)}(Q_2)} +\norm{h}_{L_{(d+2)q_0/(d+2+2q_0)}(Q_2)}\right)\norm{v}_{W^{1,2}_{q_0'}(Q_2)}\\
+\norm{u}_{L_{p_{j}}(Q_{r_j})} \left(\norm{v}_{W^{0,1}_{s_j}(Q_2)}+\norm{\boldsymbol b}_{L_{p_0}(Q_2)} \norm{v}_{W^{0,1}_{s_j}(Q_2)} +\norm{c}_{L_{p_0/2}(Q_2)} \norm{v}_{L_{s_j^*}(Q_2)} \right),
\end{multline}
where we used
\[
1/p_j+1/p_0+1/s_j=1,\quad1/p_j+2/p_0+1/s_j^*<1.
\]
Note that for $j=1,\ldots, k$, we have
\[
1/p_{j+1}'=1/s_j+1/(d+2)\quad\text{and}\quad 1/p_{j+1}'<1/q_0'.
\]
Hence, it follows from \eqref{eq1508wed} and the anisotropic Sobelev inequality that
\[
\norm{v}_{L_{s_j^*}(Q_2)}+ \norm{Dv}_{L_{s_j}(Q_2)} \le N \norm{\varphi}_{L_{p_{j+1}'}(Q_2)},\quad
\norm{v}_{W^{1,2}_{q_0'}(Q_2)} \le  N \norm{\varphi}_{L_{p_{j+1}'}(Q_2)}.
\]
Therefore, we derive from \eqref{eq1644sat} and the converse of H\"older's inequality that
\begin{multline*}
\norm{u}_{L_{p_{j+1}}(Q_{r_{j+1}})}\le \norm{\zeta u}_{L_{p_{j+1}}(Q_2)} \lesssim \norm{\mathbf f}_{L_{q_0}(Q_2)}+\norm{\boldsymbol g}_{L_{(d+2)q_0/(d+2+q_0)}(Q_2)}+\norm{h}_{L_{(d+2)q_0/(d+2+2q_0)}(Q_2)}\\
+\norm{u}_{L_{p_j}(Q_{r_j})}\left(1+ \norm{\boldsymbol b}_{L_{p_0}(Q_2)}+\norm{c}_{L_{p_0/2}(Q_2)}\right),\quad j=1,\ldots,k.
\end{multline*}
Here, we use the notation $A \lesssim B$ to mean that $A \le N(r_{j+1}-r_j)^{-2} B$.

In the case when $j=k+1$, we set
\[
1/p_{k+2}'=1/q_0',\quad 1/s_{k+1}=1/q_0'-1/(d+2), \quad  1/s_{k+1}^*=1/s_{k+1}-1/(d+2).
\]
Taking $r=r_{k+1}$ and $\rho=r_{k+2}$, similar to \eqref{eq0957tue}, we have
\begin{multline*}	
\Abs{\int_{Q_2} \zeta u \varphi\,} \lesssim
\left(\norm{\mathbf f}_{L_{q_0}(Q_2)} +\norm{\boldsymbol g}_{L_{(d+2)q_0/(d+2+q_0)}(Q_2)} +\norm{h}_{L_{(d+2)q_0/(d+2+2q_0)}(Q_2)}\right)\norm{v}_{W^{1,2}_{q_0'}(Q_2)}\\
+\norm{u}_{L_{p_{k+1}}(Q_{r_{k+1}})} \left(\norm{v}_{W^{0,1}_{s_{k+1}}(Q_2)}+\norm{\boldsymbol b}_{L_{p_0}(Q_2)} \norm{v}_{W^{0,1}_{s_{k+1}}(Q_2)} +\norm{c}_{L_{p_0/2}(Q_2)} \norm{v}_{L_{s_{k+1}^*}(Q_2)} \right),
\end{multline*}
where we used 
\[
1/p_{k+1}+1/p_0+1/s_{k+1}<1,\quad 1/p_{k+1}+2/p_0+1/s_{k+1}^*<1.
\]
Noting $\norm{v}_{W^{0,1}_{s_{k+1}}(Q_2)} \le N \norm{\varphi}_{L_{q_{0}'}(Q_2)}$, and invoking the converse of H\"older's inequality, we obtain
\begin{multline*}
\norm{u}_{L_{q_{0}}(Q_{r_{k+2}})}\le \norm{\zeta u}_{L_{q_{0}}(Q_2)} \lesssim \norm{\mathbf f}_{L_{q_0}(Q_2)}+\norm{\boldsymbol g}_{L_{(d+2)q_0/(d+2+q_0)}(Q_2)}+\norm{h}_{L_{(d+2)q_0/(d+2+2q_0)}(Q_2)}\\
+\norm{u}_{L_{p_{k+1}}(Q_{r_{k+1}})}\left(1+ \norm{\boldsymbol b}_{L_{p_0}(Q_2)}+\norm{c}_{L_{p_0/2}(Q_2)}\right).
\end{multline*}

We have proved the following theorem.
\begin{theorem}
Let $\mathbf A \in \mathrm{DMO}_x$, $\boldsymbol b \in L_{p_0}(Q_{r})$, $c \in L_{p_0/2}(Q_{r})$ with $p_0>d+2$.
Let $\mathbf f \in L_{q_0}(Q_{r})$, $\boldsymbol g \in L_{(d+2)q_0/(d+2+q_0)}(Q_{r})$, and $h \in L_{(d+2)q_0/(d+2+2q_0)}(Q_{r})$, where $q_0>(d+2)/d$.
Suppose $u \in L_{1}(Q_{r})$ is a weak solution of
\[
\mathscr{L}^*u-cu=\dv^2 \mathbf{f}+\dv \boldsymbol g+h\;\text{ in }\;Q_{r}.
\]
Then we have $u \in L_{q_0}(Q_{\rho})$ for any $\rho<r$.
\end{theorem}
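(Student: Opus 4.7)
The plan is to adapt the duality and bootstrap argument from the elliptic case (Theorem \ref{thm2.14}) to the parabolic setting. By a standard scaling/translation reduction, I may assume $r=2$ and it suffices to show $u \in L_{q_0}(Q_1)$.

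The first step is to set up the duality. For an arbitrary $\varphi \in C^\infty_c(Q_2)$, I solve the dual parabolic problem $v_t + a^{ij} D_{ij} v = \varphi$ in $Q_2$ with zero data on the parabolic boundary $\partial_{x,t} Q_2$. The parabolic $L_p$ theory for $\mathrm{VMO}_x$ coefficients (which covers $\mathrm{DMO}_x$) yields $\norm{v}_{W^{1,2}_p(Q_2)} \le N \norm{\varphi}_{L_p(Q_2)}$ for every $p \in (1,\infty)$, and the sharper result \cite[Theorem 3.2]{DEK21} under $\mathrm{DMO}_x$ gives $v \in W^{1,2}_\infty(Q_{r'})$ for any $r' \in (0,2)$ so that mollification arguments below are justified.

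The second step is to derive the representation identity for $\int \zeta u\, \varphi$, where $\zeta \in C^\infty_c(Q_r)$ is a cutoff with $\zeta = 1$ on $Q_\rho$ for $1 < \rho < r < 2$. I test the weak formulation for $u$ against $\eta = \zeta v_\varepsilon$, where $v_\varepsilon$ is the standard mollification of $v$, and pass $\varepsilon \to 0$ using dominated convergence (the $W^{1,2}_\infty$ bound on $v$ supplies the majorant). Separately, multiplying the dual equation by $\zeta u$ and integrating produces a companion identity. Subtracting cancels the $\zeta D_{ij} v$ and $\zeta \partial_t v$ terms and yields a formula expressing $\int \zeta u \, \varphi$ purely through $\mathbf f, \boldsymbol g, h$, $\boldsymbol b u$, $cu$, $u$, paired with $v$ and its first derivatives only.

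The third step is to estimate each term via H\"older's inequality, the parabolic $L_p$ estimate, the anisotropic Sobolev embedding $W^{1,2}_{p_1'}(Q_2) \hookrightarrow W^{0,1}_\infty(Q_2)$ valid for $p_1' > d+2$, and the hypotheses on $\mathbf f, \boldsymbol g, h$, yielding a bound $\abs{\int \zeta u\, \varphi} \lesssim \norm{\varphi}_{L_{p_1'}(Q_2)} \cdot (\text{data} + \text{lower } L^q \text{ norms of } u)$. Since $\varphi$ is arbitrary, the converse of H\"older's inequality gives $u \in L_{p_1}(Q_{r_1})$ for any $p_1 < (d+2)/(d+1)$.

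The final step is the iteration. I choose $k$ to be the largest integer with $1/(d+2) + k(1/(d+2) - 1/p_0) \le 1/q_0'$ and, with $p_1' > d+2$ sufficiently close to $d+2$, define the sequences
\[
1/p_{j}' = 1/p_1' + (j-1)(1/(d+2) - 1/p_0), \quad 1/s_j = 1/p_j' - 1/p_0, \quad 1/s_j^* = 1/s_j - 1/(d+2).
\]
At stage $j$, one reruns the representation identity with radii $r_j, r_{j+1}$; the already-established membership $u \in L_{p_j}(Q_{r_j})$ together with $\boldsymbol b \in L_{p_0}$ and $c \in L_{p_0/2}$ allows one to pair $\boldsymbol b u$ against $Dv \in L_{s_j}$ and $cu$ against $v \in L_{s_j^*}$, yielding $u \in L_{p_{j+1}}(Q_{r_{j+1}})$. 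A final application with $1/p_{k+2}' = 1/q_0'$ lands at $u \in L_{q_0}(Q_{r_{k+2}})$, and a covering argument concludes.

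The main obstacle is the bookkeeping of exponents in the iteration: one must verify at every stage that $1/p_j + 1/p_0 + 1/s_j \le 1$ and $1/p_j + 2/p_0 + 1/s_j^* \le 1$ so that the H\"older pairings close, and that $p_{j+1}' < q_0'$ so the dual $L_p$ estimate for $v$ still applies. The condition $p_0 > d+2$ is the key: it guarantees $1/(d+2) - 1/p_0 > 0$, ensuring strict gain in integrability at each step and termination in finitely many iterations.
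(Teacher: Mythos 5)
Your proposal is correct and follows essentially the same route as the paper: duality against the adjoint parabolic equation, the $W^{1,2}_\infty$ regularity from \cite[Theorem 3.2]{DEK21} to justify mollification, the cancellation of the $\zeta D_{ij}v$ and $\zeta \partial_t v$ terms, and the converse-H\"older bootstrap with the same exponent sequences. One small slip: the condition you need in the iteration is $p_{j+1}' > q_0'$ (equivalently $1/p_{j+1}' < 1/q_0'$), not $p_{j+1}' < q_0'$, and the reason is not that the $L_p$ estimate for $v$ applies (it holds for all $p \in (1,\infty)$) but that on the bounded cylinder you need $L_{p_{j+1}'} \hookrightarrow L_{q_0'}$ so that the data term $\norm{v}_{W^{1,2}_{q_0'}}$ can be controlled by $\norm{\varphi}_{L_{p_{j+1}'}}$.
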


\subsection{Continuity}
The following theorem essentially restates \cite[Theorem 1.4]{DEK21}. 
For further details, consult \cite{DEK21} as well as the proofs of Theorems \ref{thm2.16} and \ref{thm3.17}.

\begin{theorem}
Let $\Omega$ be a bounded $C^{1,1}$ domain in $\mathbb{R}^d$, and recall notations in \eqref{eq0819thu}.
Assume $\mathbf A \in \mathrm{DMO}_x$, $\boldsymbol b \in L_{p_0}(\Omega_{t_0,t_1})$, and $c \in L_{p_0/2}(\Omega_{t_0,t_1})$ with $p_0>d+2$.
Suppose $u \in L_1(\Omega_{t_0,t_1})$ satisfies
\[
\mathscr{L}^*u-cu=\dv^2 \mathbf{f}+\dv \boldsymbol g+h \;\text{ in }\; \Omega_{t_0,t_1},\quad 
u=\frac{\mathbf f \nu \cdot \nu}{\mathbf A \nu\cdot \nu} \;\text{ on }\;\partial_{x}\Omega_{t_0,t_1},
\]
where $\mathbf{f} \in \mathrm{DMO}_x \cap L_\infty(\Omega_{t_0,t_1})$,  $\boldsymbol g\in L_{q_0}(\Omega_{t_0,t_1})$, and $h\in L_{q_0/2}(\Omega_{t_0,t_1})$ for some $q_0>d+2$.
Then, for any $t_0'$ and $t_1'$ satisfying $t_0<t_0'<t_1'<t_1$, we have $u\in C^{0,0}(\overline \Omega_{t_0',t_1'})$.
\end{theorem}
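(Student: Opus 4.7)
The plan is to mirror the two-step argument used for Theorems \ref{thm2.16} and \ref{thm3.17} (higher integrability of $u$, followed by reduction to a pure double-divergence equation) and adapt it to the backward parabolic operator $\mathscr{L}^*$ with anisotropic scaling. Because the conclusion is only required on the time slab $[t_0', t_1']$ with $t_0 < t_0' < t_1' < t_1$, I fix an auxiliary interval $(t_0'', t_1'')$ with $t_0 < t_0'' < t_0' < t_1' < t_1'' < t_1$ on which to run the construction, and use a partition of unity together with $C^{1,1}$-flattening of $\partial \Omega$ to localize near interior and lateral boundary points.

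First, I would bootstrap the integrability of $u$. The interior higher-integrability theorem proved in Section 4.3, combined with its boundary version (obtained by the same half-cylinder flattening and reflection argument used for Theorem \ref{thm2.16}, with the anisotropic parabolic Sobolev embedding and $W^{1,2}_p$-estimates replacing their isotropic $W^2_p$-counterparts), yields $u \in L_q(\Omega_{t_0'',t_1''})$ for every $q \in (1,\infty)$. Combined with $\boldsymbol b \in L_{p_0}$, $c \in L_{p_0/2}$ ($p_0 > d+2$), $\boldsymbol g \in L_{q_0}$ and $h \in L_{q_0/2}$ ($q_0 > d+2$), H\"older's inequality provides some $p > d+2$ with $\boldsymbol g + \boldsymbol b u \in L_p(\Omega_{t_0'',t_1''})$ and $c u + h \in L_{p/2}(\Omega_{t_0'',t_1''})$.

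Next, I would eliminate the lower-order terms and the subcritical data by absorbing them into the principal operator. Rewriting the equation as
\[
-\partial_t u + \dv^2(\mathbf A u) = \dv^2 \mathbf f + \dv(\boldsymbol g + \boldsymbol b u) + cu + h,
\]
I let $w$ be the unique $W^{1,2}_p$-solution of the backward heat equation
\[
-\partial_t w + \Delta w = \dv(\boldsymbol g + \boldsymbol b u) + cu + h\;\text{ in }\;\Omega_{t_0'',t_1''},\quad w=0\;\text{ on }\;\partial_x\Omega_{t_0'',t_1''},\quad w(t_1'',\cdot)=0.
\]
By parabolic $L_p$-theory and the anisotropic parabolic Sobolev embedding, $w$ is H\"older continuous on $\overline{\Omega_{t_0'',t_1''}}$; in particular $w \mathbf I \in \mathrm{DMO}_x \cap L_\infty$ and $(w\mathbf I)\nu\cdot\nu = w = 0$ on $\partial_x\Omega_{t_0'',t_1''}$. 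Consequently $u$ is a weak solution of the pure double-divergence problem
\[
\mathscr{L}^* u = \dv^2(\mathbf f + w \mathbf I)\;\text{ in }\;\Omega_{t_0'',t_1''},\quad u = \frac{(\mathbf f + w \mathbf I)\nu\cdot\nu}{\mathbf A\nu\cdot\nu}\;\text{ on }\;\partial_x\Omega_{t_0'',t_1''},
\]
with $\mathbf A \in \mathrm{DMO}_x$ and $\mathbf f + w \mathbf I \in \mathrm{DMO}_x \cap L_\infty$.

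Finally, I would invoke the parabolic double-divergence continuity result \cite[Theorem 1.4]{DEK21} — the exact parabolic analog of \cite[Theorem 1.10]{DK17} — to conclude that $u$ admits a continuous representative on $\overline{\Omega_{t_0',t_1'}}$; interior continuity is immediate, and continuity up to the lateral boundary follows via the same $C^{1,1}$-flattening combined with the partition of unity as in Theorem \ref{thm2.16}. The main technical obstacle I anticipate lies in the boundary-flattening bookkeeping: one must verify that the change of variables preserves $\mathrm{DMO}_x$ of both $\mathbf A$ and $\mathbf f + w \mathbf I$, that the transformed $w$-equation still has vanishing lateral trace, and that the transformed lower-order coefficients remain in the same Lebesgue classes with norms uniformly controlled. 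Once these details are handled, the argument proceeds in parallel with its elliptic counterpart.
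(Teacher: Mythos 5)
Your proof is correct and takes essentially the same route as the paper, which gives no detailed argument but remarks that the theorem essentially restates \cite[Theorem 1.4]{DEK21} and refers the reader to the proofs of Theorems \ref{thm2.16} and \ref{thm3.17}; your proposal is precisely that parabolic transcription, including the interior-in-time localization $t_0<t_0''<t_0'<t_1'<t_1''<t_1$ needed because no initial condition is prescribed. One small notational slip: after subtracting $w$, the reduced equation should read $-\partial_t u + \dv^2(\mathbf A u) = \dv^2(\mathbf f + w\mathbf I)$ rather than $\mathscr{L}^*u = \dv^2(\mathbf f + w\mathbf I)$, since $\mathscr{L}^*$ as defined still carries the drift term $-\dv(\boldsymbol{b}u)$ that you have just absorbed into $w$.
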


\subsection{Parabolic Harnack inequality}			\label{sec4.2}
The following theorem is contained in \cite[Theorem 1.4]{GK24}, which covers more general cases.
For further details, refer to \cite{GK24}.
Here, we assume that $\mathbf{A}$, $\boldsymbol{b}$, and $c$ are defined over the entire space $\mathbb{R}^{d+1}$.

\begin{theorem}
Let $\mathbf A \in \mathrm{DMO}_x$, $\boldsymbol b \in L_{p_0}$, and $c \in L_{p_0/2}$ for some $p_0> d+2$.
Let $R>0$ be a fixed number, $0<r<R/4$, and $(t_0, x_0) \in \bR^{d+1}$.
Denote $C_{r}=(t_0-r^2, t_0] \times B_{r}(x_0)$. 
Suppose $u \in L_1(C_{4r})$ is a nonnegative solution of
\[
\mathscr{L}^* u - cu =0\;\;\text{ in }\;\; C_{4r}.
\]
Then, we have
\[
\sup_{(t_0-3r^2, t_0-2r^2)\times B_{r/3}(x_0)} u \le N \inf_{(t_0-r^2, t_0)\times B_{r/3}(x_0)} u,
\]
where $N$ is a constant depending only on $d$, $\delta$, $\omega_{\mathbf A}^{x}$, $p_0$, $\norm{\boldsymbol b}_{L_{p_0}}$, $\norm{c}_{L_{p_0/2}}$, and $R$.
\end{theorem}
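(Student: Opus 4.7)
The plan is to follow the Bauman--Escauriaza--Fabes--Stroock blueprint, adapted to the double divergence parabolic setting as in \cite{GK24}, by splitting the Harnack inequality into a local maximum principle and a weak Harnack estimate. First I would normalize: by parabolic scaling $(t,x)\mapsto((t-t_0)/r^2,(x-x_0)/r)$ and translation, reduce to $r=1$ and $(t_0,x_0)=(0,0)$. The hypothesis $r<R/4$ ensures that after rescaling, the norms $\|\boldsymbol b\|_{L_{p_0}}$ and $\|c\|_{L_{p_0/2}}$ on the unit-scale cylinder are controlled in a subcritical way (since $p_0>d+2$ makes both terms scale with a negative power of $r$), which is precisely what accounts for the $R$-dependence of $N$. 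By the higher integrability theorem of the previous subsection, $u\in L_q(C_3)$ for all $q<\infty$, and by the continuity theorem $u$ has a continuous representative, so the pointwise sup and inf are meaningful.

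Next I would establish the \emph{local maximum principle}
\[
\sup_{(-3,-2)\times B_{1/3}} u \;\le\; N\fint_{(-7/2,-3/2)\times B_{1/2}} u.
\]
Because the equation is in double divergence form, Moser iteration on $u$ itself is unavailable, so I would argue by duality exactly as in Section~4.3. For $(s,y)$ in the sup region, solve the non-divergence adjoint problem $\mathscr Lv-cv=\varphi$ on a slightly larger cylinder, with $\varphi\ge0$ an approximate Dirac mass at $(s,y)$ and zero data on the parabolic boundary; by the ABP/maximum principle $v\ge0$. Testing the weak formulation of $\mathscr L^*u-cu=0$ against $\zeta v$ with an appropriate cutoff $\zeta$, one obtains $u(s,y)\lesssim\int\zeta u$, after absorbing the $\boldsymbol b$ and $c$ terms via Krylov's $W^{1,2}_{p}$ estimate combined with the anisotropic Sobolev embedding — the same chain of inequalities that drove the higher integrability argument.

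For the matching \emph{weak Harnack inequality}
\[
\Bigl(\fint_{(-7/2,-3/2)\times B_{1/2}} u^{\varepsilon}\Bigr)^{1/\varepsilon}
\;\le\; N\inf_{(-1,0)\times B_{1/3}} u
\]
for some $\varepsilon>0$, I would use the Krylov--Safonov crawling-of-ink-spots argument adapted to the adjoint equation, as in \cite{GK24}. The engine is a growth/measure lemma: if $u\ge1$ on a small parabolic cube, then $\{u>\lambda\}$ occupies a definite fraction of a larger cube. This lemma is proved by duality, solving a non-divergence adjoint problem with right-hand side supported on the smallness set and using an ABP estimate on $v$ to propagate measure. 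Iterating this yields decay of the distribution function of $1/u$ and hence the $L^{\varepsilon}$ bound. Combining the two estimates with a standard chaining across a finite number of shifted cylinders links the earlier time slab $(-3,-2)\times B_{1/3}$ to the later slab $(-1,0)\times B_{1/3}$ and produces the stated inequality.

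The main obstacle is the weak Harnack step: the double divergence structure prevents any direct energy or log-supersolution computation on $u$, so all estimates must be routed through the dual non-divergence problem, and the singular lower-order terms $\boldsymbol b\in L_{p_0}$, $c\in L_{p_0/2}$ must be handled via the sharp Krylov--type solvability from \cite{Krylov2023e}. The subcritical scaling of these norms (guaranteed by $p_0>d+2$ and $r<R/4$) is what keeps the perturbation absorbable across the iterative argument.
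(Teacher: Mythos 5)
The paper does not actually prove this theorem; it simply states that the result ``is contained in \cite[Theorem 1.4]{GK24}'' and refers the reader there, so there is no proof in this survey to compare your argument against. That said, your outline --- reduce by parabolic scaling, split into a local maximum principle and a weak Harnack inequality, and prove both by routing everything through the dual non-divergence problem $\mathscr L v - cv = \varphi$ with Krylov's Morrey/$L_p$ solvability, then chain --- is exactly the Bauman--Escauriaza--Fabes--Stroock / Krylov--Safonov blueprint that such a result would be expected to follow, and it is consistent with how the adjacent results in Sections 4.2--4.3 of this paper and the companion reference \cite{GK24} are organized. One small slip: after the parabolic rescaling $\tilde{\boldsymbol b}(t,x) = r\,\boldsymbol b(t_0 + r^2 t, x_0 + r x)$, one has $\|\tilde{\boldsymbol b}\|_{L_{p_0}} = r^{\,1-(d+2)/p_0}\|\boldsymbol b\|_{L_{p_0}}$ and similarly for $c$, so the rescaled norms scale as a \emph{positive} power of $r$ (not a negative one) when $p_0 > d+2$; this is precisely what makes the perturbation subcritical and, together with $r < R/4$, yields the $R$-dependence of $N$. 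Beyond that, your description correctly identifies the technical heart (the weak Harnack step, which cannot be done by energy or Moser methods on $u$ directly and must go through an ABP-type growth lemma on the dual problem plus crawling-of-ink-spots iteration), but since these steps are only sketched, the proposal should be read as a roadmap rather than a complete proof.
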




\begin{thebibliography}{m}
\bibitem{Bauman84b}
Bauman, Patricia
\textit{Positive solutions of elliptic equations in nondivergence form and their adjoints}.
Ark. Mat. \textbf{22} (1984), no. 2, 153--173.

\bibitem{BKRS15}
Bogachev, Vladimir I.; Krylov, Nicolai V.; R\"ockner, Michael; Shaposhnikov, Stanislav V.
\textit{Fokker-Planck-Kolmogorov equations}.
Math. Surveys Monogr., \textbf{207}.
American Mathematical Society, Providence, RI, 2015.

\bibitem{BRS23}
Bogachev, Vladimir; R\"ockner, Michael; Shaposhnikov, Stanislav.
\textit{Zvonkin's transform and the regularity of solutions to double divergence form elliptic equations}.
Comm. Partial Differential Equations \textbf{48} (2023), no. 1, 119--149.

\bibitem{Brezis}
Brezis, Ha\"im.
\textit{On a conjecture of J. Serrin}. 
Atti Accad. Naz. Lincei Rend. Lincei Mat. Appl. \textbf{19} (2008), no.4, 335--338.

\bibitem{CDKK24}
Choi, Jongkeun; Dong, Hongjie; Kim, Dong-ha; Kim, Seick. 
\textit{Regularity of elliptic equations in double divergence form and applications to Green's function estimates}.
arXiv:2401.06621 [math.AP]

\bibitem{DEK18}
Dong, Hongjie; Escauriaza, Luis; Kim, Seick.
\textit{On $C^1$, $C^2$, and weak type-$(1,1)$ estimates for linear elliptic operators: part II}.
Math. Ann. \textbf{370} (2018), no. 1-2, 447--489.

\bibitem{DEK21}
Dong, Hongjie; Escauriaza, Luis; Kim, Seick.
\textit{On $C^{1/2,1}$, $C^{1,2}$, and $C^{0,0}$ estimates for linear parabolic operators}.
J. Evol. Equ. \textbf{21} (2021), no. 4, 4641--4702.

\bibitem{DK17}
Dong, Hongjie; Kim, Seick.
\textit{On $C^1$, $C^2$, and weak type-$(1,1)$ estimates for linear elliptic operators}.
Comm. Partial Differential Equations \textbf{42} (2017), no. 3, 417--435.

\bibitem{DK21}
Dong, Hongjie; Kim, Seick.
\textit{Green's function for nondivergence elliptic operators in two dimensions}.
SIAM J. Math. Anal. \textbf{53} (2021), no. 4, 4637--4656.

\bibitem{DKL23}
Dong, Hongjie; Kim, Seick; Lee, Sungjin.
\textit{Note on Green's functions of non-divergence elliptic operators with continuous coefficients}.
Proc. Amer. Math. Soc. \textbf{151} (2023), no. 5, 2045--2055.

\bibitem{DKL22}
Dong, Hongjie; Kim, Seick; Lee, Sungjin.
\textit{Estimates for fundamental solutions of parabolic equations in non-divergence form}.
J. Differential Equations \textbf{340} (2022), 557--591.

\bibitem{EM2017}
Escauriaza, Luis; Montaner, Santiago.
\textit{Some remarks on the $L^p$ regularity of second derivatives of solutions to non-divergence elliptic equations and the Dini condition}. Rend. Lincei Mat. Appl. \textbf{28} (2017), 49-63.

\bibitem{GK24}
Gyongy, Istvan; Kim, Seick.
\textit{Harnack inequality for parabolic equations in double-divergence form with singular lower order coefficients}.
J. Differential Equations 412 (2024), 857--880.

\bibitem{HK20}
Hwang, Sukjung; Kim, Seick.
\textit{Green's function for second order elliptic equations in non-divergence form}.
Potential Anal. \textbf{52} (2020), no. 1, 27--39.

\bibitem{KL21}
Kim, Seick; Lee, Sungjin.
\textit{Estimates for Green's functions of elliptic equations in non-divergence form with continuous coefficients}.
Ann. Appl. Math. \textbf{37} (2021), no. 2, 111--130.


\bibitem{Krylov2007}
Krylov, N. V.
\textit{Parabolic and elliptic equations with VMO coefficients}.
Comm. Partial Differential Equations \textbf{32} (2007), no. 1-3, 453--475.

\bibitem{Krylov2023e}
Krylov, N. V.
\textit{Elliptic equations in Sobolev spaces with Morrey drift and the zeroth-order coefficients}.
Trans. Amer. Math. Soc. \textbf{376} (2023), no.10, 7329--7351.

\bibitem{LSU}
Lady\v{z}enskaja, O. A.; Solonnikov, V. A.; Ural'ceva, N. N.
\textit{Linear and quasilinear equations of parabolic type}.
American Mathematical Society: Providence, RI, 1967.

\bibitem{Sjogren73}
Sj\"ogren, Peter.
\textit{On the adjoint of an elliptic linear differential operator and its potential theory}.
Ark. Mat. \textbf{11} (1973), 153--165.

\bibitem{Sjogren75}
Sj\"ogren, Peter.
\textit{Harmonic spaces associated with adjoints of linear elliptic operators}.
Ann. Inst. Fourier (Grenoble) \textbf{25} (1975), no. 3-4, xviii, 509--518.

\end{thebibliography}
\end{document}